\newcommand{\Assouad}{\dim_{\mathrm{A}}}
\newcommand{\pAssouad}{\dim_{\mathrm{mA}}}
\newcommand{\ubox}{\overline{\dim_{\mathrm{B}}}}
\newcommand{\lbox}{\underline{\dim_{\mathrm{B}}}}
\newcommand{\boxd}{\dim_{\mathrm{B}}}
\newcommand{\Haus}{\dim_{\mathrm{H}}}
\newtheorem*{thm*}{Theorem}
\newtheorem*{defn*}{Definition}
\newtheorem{thm}{Theorem}[section]
\newtheorem{lma}[thm]{Lemma}
\newtheorem{cor}[thm]{Corollary}
\newtheorem{defn}[thm]{Definition}
\newtheorem{prop}[thm]{Proposition}
\newtheorem{conj}[thm]{Conjecture}
\newtheorem{rem}[thm]{Remark}
\begin{document}
	
	\title{An improvement on Furstenberg's intersection problem }
	
	\author{Han Yu}
	\address{Han Yu\\
		Department of Pure Mathematics and Mathematical Statistics\\University of Cambridge\\CB3 0WB \\ UK }
	\curraddr{}
	\email{hy351@maths.cam.ac.uk}
	\thanks{}
	
	\subjclass[2010]{Primary: 11K55, 28A50, 28A80, 28D05, 37C45.}
	
	\keywords{dynamical system, binary-ternary independence, discrepancy}
	
	\date{}
	
	\dedicatory{}
	
	\begin{abstract}
		In this paper, we study a problem posed by Furstenberg on intersections between $\times 2, \times 3$ invariant sets. We present here a direct geometrical counting argument to revisit a theorem of Wu and Shmerkin. This argument can be used to obtain further improvements. For example, we show that if $A_2,A_3\subset [0,1]$ are closed and $\times 2, \times 3$ invariant respectively, assuming that $\dim A_2+\dim A_3<1$ then $A_2\cap (uA_3+v)$ is sparse (defined in this paper) and has box dimension zero uniformly with respect to the real parameters $u,v$ such that $u$ and $u^{-1}$ are both bounded away from $0$.
	\end{abstract}
	
	\maketitle
	\allowdisplaybreaks
	
	\section{Introduction and motivation}\label{INTRO}
	\subsection{Furstenberg's intersection problem}
	Furstenberg (\cite{Fu2}) posed a series of fundamental questions on the transversality of dynamical systems. A central heuristic of those problems is that $\times 2, \times 3$ should be two very different actions. Towards this direction, we have the following conjecture. 
	\begin{conj}[The strong Furstenberg's intersection problem]\label{Conj}
		Let $A_2,A_3$ be closed $\times 2,\times 3$ invariant sets respectively and such that $\Haus A_2+\Haus A_3<1$. Then the intersection $A_2\cap A_3$ contains only rational numbers.
	\end{conj}
	Here $\Haus$ is the Hausdorff dimension, and it will be discussed in Section \ref{DIM}. The original form of the above conjecture states that for any irrational number $a,$ its $\times 2\mod 1$ orbit closure and $\times 3\mod 1$ orbit closure have total Hausdorff dimension at least $1$. In this direction, an earlier result of Furstenberg in \cite{Fu1} states that the $\times 2, \times 3\mod 1$ orbit, i.e.
	\[
	\{2^k3^m a\mod 1\}_{k,m\geq 1}
	\]
	is dense in $[0,1].$ As we mentioned above, Furstenberg posed a series of deep conjectures of this kind. Conjecture \ref{Conj} is perhaps the strongest among all of them. Some weaker conjectures are recently resolved, and we now introduce them.  In \cite{Fu2}, Furstenberg introduced the notion of CP-chains and showed that if $\Haus A_2+\Haus A_3<0.5$, then $\Haus A_2\cap A_3=0.$ One of the first breakthroughs of Conjecture \ref{Conj} is the following result which appeared in \cite{HS12}. This result settled an earlier sumset conjecture of Furstenberg.
	\begin{thm}[Hochman-Shmerkin]
		Let $A_2,A_3$ be closed $\times 2,\times 3$ invariant sets respectively. For all real numbers $u,v$ such that $uv\neq 0$ we have the following result
		\[
		\Haus(uA_2+vA_3)=\max\{1,\Haus A_2+\Haus A_3\}.
		\]
	\end{thm}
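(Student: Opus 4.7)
The plan is to split the result into an upper bound and a lower bound. The upper bound is routine, while the lower bound is the deep part and is where I expect all the difficulty to lie.

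For the upper bound, I observe that $uA_2+vA_3$ is the image of $uA_2\times vA_3$ under the $1$-Lipschitz map $\sigma(x,y)=x+y$, hence $\Haus(uA_2+vA_3)\leq \Haus(uA_2\times vA_3)$. The general product inequality $\Haus(A\times B)\leq \Haus A+\ubox B$, combined with the equality $\Haus A_3=\ubox A_3$ for closed $\times p$-invariant sets (cited in Section 2.5), gives $\Haus(uA_2+vA_3)\leq \Haus A_2+\Haus A_3$. Together with the trivial bound $\Haus(uA_2+vA_3)\leq 1$ this yields $\Haus(uA_2+vA_3)\leq \min\{1,\Haus A_2+\Haus A_3\}$. (The $\max$ in the stated theorem has to be a typo for $\min$, since Hausdorff dimension in $\mathbb{R}$ never exceeds $1$.)

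For the lower bound I would follow the Hochman--Shmerkin strategy. First I pick, by a standard variational argument, Borel measures $\mu_2$ on $A_2$ and $\mu_3$ on $A_3$ whose lower pointwise dimensions are arbitrarily close to $\Haus A_2$ and $\Haus A_3$ respectively; such measures exist for closed $\times p$-invariant sets because one can take measures of maximal dimension coming from the unique ergodicity on appropriate subsystems. Let $\nu = S_{u,v}{}_*(\mu_2\otimes\mu_3)$, where $S_{u,v}(x,y)=ux+vy$. Then $\nu$ is supported on $uA_2+vA_3$, so it suffices to show that $\dim \nu\geq \min\{1,\dim\mu_2+\dim\mu_3\}$.

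The heart of the argument is an entropy-increase principle at dyadic scales. Working with the partitions $\mathcal{D}_n$ of $\mathbb{R}$ into dyadic intervals of length $2^{-n}$, the quantity $H(\nu,\mathcal{D}_n)/(n\log 2)$ approximates $\dim\nu$. I would apply an inverse theorem for entropy of convolutions in the spirit of Hochman: either $H((u\cdot)_*\mu_2 * (v\cdot)_*\mu_3,\mathcal{D}_n)$ is already close to $\min\{1,\dim\mu_2+\dim\mu_3\}\cdot n\log 2$ at most scales (and we are done), or else the two measures must concentrate on highly arithmetic, progression-like pieces at a positive density of scales. The latter alternative is then ruled out by passing to tangent measures (or CP-chains): any micro-set of $A_2$ and $A_3$ inherits $\times 2$ and $\times 3$ invariance, and the Rudolph--Johnson rigidity theorem for jointly $\times 2,\times 3$-invariant measures forbids simultaneous arithmetic structure at all but a sparse set of scales, precisely because $\log 2/\log 3$ is irrational.

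The main obstacle is exactly the inverse-theorem step: extracting rigid multi-scale additive structure from a convolution-entropy deficit and then contradicting it using Rudolph--Johnson rigidity. Making this quantitative requires the full CP-chain apparatus and the entropy-porosity machinery developed by Hochman, which is substantial; this is why the theorem is cited from \cite{HS12} rather than reproved here, as a self-contained treatment would essentially reproduce the technical core of that paper.
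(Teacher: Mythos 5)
The paper does not prove this theorem: it is stated as a cited result from \cite{HS12} in Section~\ref{FU}, with no proof supplied, so there is no ``paper's own proof'' to compare your attempt against. Given that, the natural standard to hold you to is whether your sketch correctly identifies what is easy, what is hard, and what the hard part relies on --- and on that standard your proposal is in good shape.

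Your upper bound is correct and complete: the sum is a Lipschitz image of the product $uA_2\times vA_3$, the product inequality $\Haus(A\times B)\le\Haus A+\ubox B$ applies, and the identity $\Haus A_p=\ubox A_p$ for closed $\times p$-invariant sets (which the paper records in Section~\ref{INV} via \cite[Theorem 5.1]{Fu}) closes the gap to $\Haus A_2+\Haus A_3$; combined with the trivial bound $\le 1$ this gives the $\min$. You are also right that the $\max$ in the statement is a typo for $\min$, since a subset of $\mathbb{R}$ cannot have Hausdorff dimension exceeding $1$.

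For the lower bound your description of the strategy --- passing to measures of nearly maximal dimension, analysing entropy of the convolution at dyadic scales, invoking an inverse theorem, and ruling out persistent arithmetic structure via CP-chains/micro-sets and the irrationality of $\log 2/\log 3$ --- is a fair high-level account of \cite{HS12}, and you are right that filling it in would amount to reproducing the technical core of that paper. One small caveat: the rigidity input that does the work in Hochman--Shmerkin is not the Rudolph--Johnson theorem on jointly $\times 2,\times 3$-invariant measures; it is rather the ergodic-theoretic machinery of CP-distributions and the projection theorem for measures generating an ergodic CP-chain, with the irrationality of $\log 2/\log 3$ entering through equidistribution of scales rather than through joint invariance. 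Since you flag the whole lower-bound argument as a sketch to be imported from \cite{HS12}, this is a mislabel rather than a gap, but worth correcting if you write this out.
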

	Intuitively speaking, the above result says that the projections of $A_2\times A_3$ along non-trivial directions (i.e. not horizontal nor vertical) all attain the largest possible dimension. This somehow indicates that the fibres (i.e. intersections with lines orthogonal with the projecting direction) should not be large. A precise form of the intersection result was proved in \cite{Sh} and \cite{Wu} independently which improves Furstenberg's original result. The following result settled an earlier intersection conjecture of Furstenberg. 
	\begin{thm}[Shmerkin, Wu]\label{SW}
		Let $A_2,A_3$ be closed $\times 2,\times 3$ invariant sets respectively. For all real numbers $u,v$ such that $u\neq 0$ we have the following result
		\[
		\ubox(A_2\cap (uA_3+v))\leq \min\{0,\Haus A_2+\Haus A_3-1\}.
		\]
	\end{thm}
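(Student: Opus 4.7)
The plan is to adapt the strategy of Theorem \ref{Th2} to the two-dimensional setting, lifting the intersection $l=A_2\cap(uA_3+v)$ into the product set $A_2\times A_3\subset[0,1]^2$ and exploiting the irrationality of $\alpha=\log 2/\log 3$ via the discrepancy estimates of Section \ref{IRR}. I focus on the regime $s:=\Haus A_2+\Haus A_3<1$ where the target becomes $\ubox l=0$ (so the conclusion should be read as $\max\{0,s-1\}$, which is zero in this range); the case $s\in[1,2]$ reduces to this via the sumset-style decomposition sketched in Section \ref{INTRODY} and developed in Section \ref{LARGER}.

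First I would translate the dimension bound into a sparseness statement: by Proposition \ref{USPA} it suffices to show that $l$ is uniformly sparse, i.e.\ $\#(W(|l-l|)\cap[1,N])=o(N)$. The central observation is a renormalisation that converts pairs of nearby points in $l$ into dipoles in $A_2\times A_3$. Namely, given $x_1,x_2\in l$ with $|x_1-x_2|\in[2^{-k-1},2^{-k}]$, set $y_i=(x_i-v)/u\in A_3$ and apply $T_2^k$ to the $x$-coordinate and $T_3^{m(k)}$ to the $y$-coordinate, with $m(k)=\lfloor k\alpha\rfloor$. The identity $2^k/3^{m(k)}=3^{\{k\alpha\}}\in[1,3]$ forces the renormalised pair $(T_2^k x_i,T_3^{m(k)}y_i)$ to live in $A_2\times A_3$, to be separated by a distance of order one, and to lie along a line of slope $u\cdot 3^{\{k\alpha\}}$. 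Consequently every $k\in W(|l-l|)$ produces a dipole in $A_2\times A_3$ whose direction is an explicit smooth function of the phase $\{k\alpha\}$.

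Next I would bound the resulting target set on the circle. Writing $D\subset[0,1]$ for the pullback through this smooth function of the dipole direction set $DD(A_2\times A_3)$, the inequality of Section \ref{Dip} combined with the subadditivity $\ubox(A_2\times A_3)\leq s$ (using $\ubox A_p=\Haus A_p$ from Section \ref{INV}) gives $\ubox D\leq 2s$. By construction $\{k\alpha\}\in D$ for every $k\in W(|l-l|)$, so Lemma \ref{LM2} applied to the rotation $R_\alpha$ with $\alpha=\log 2/\log 3$ yields a polynomial bound $\#(W(|l-l|)\cap[1,N])=O_\epsilon(N^{C(\alpha)(2s+\epsilon)})$. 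With the Rhin-type exponent $C(\alpha)\leq 14.3$ recorded in \cite{R85}, this gives uniform sparseness of $l$ in the range $s<1/(2C(\alpha))$ and so delivers $\ubox l=0$ throughout that range, matching the polynomial bound promised in Theorem \ref{Thhalf}.

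The main obstacle is crossing the threshold $s<1/2$ and reaching the full range $s<1$. The naive dipole count above produces only one dipole per bad scale and is therefore weighed down by the factor of two in $\ubox D\leq 2s$. To close this gap one must extract, at each relevant scale, many dipoles arising simultaneously from the whole renormalised slice rather than from a single pair. This is precisely where Sinai's factor theorem as developed in Section \ref{SINAI} should intervene: it supplies the fibrewise independence needed to replace $2s$ by essentially $s$ in the dimensional bound on $D$, at the price of passing from a polynomial bound to a qualitative $o(N)$ one, matching Theorem \ref{Thone}. A secondary but real technicality is the $\bmod\,1$ wrapping under the high iterates $T_2^k$ and $T_3^{m(k)}$; I would absorb it by replacing the naive renormalisation by one based on cylinder sets, ensuring the dipoles truly lie inside $A_2\times A_3$ uniformly in $u,v$.
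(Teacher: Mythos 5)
Theorem \ref{SW} is not proved in this paper; it is cited from Shmerkin and Wu, and the paper's own Theorems \ref{Thhalf} and \ref{Thone} only recover the case $s:=\Haus A_2+\Haus A_3<1$ (yielding $\ubox l_{u,v}=0$, and indeed the stronger sparseness statements). Your outline for that range is a faithful description of the paper's argument: the renormalisation-to-dipole step with $\ubox DD(A_3\times A_2)\leq 2s<1$ plus Lemma \ref{Target} is Theorem \ref{Thhalf}, and the escalation to all $s<1$ via Sinai's factor theorem is Theorem \ref{Thone}. Two imprecisions worth flagging: (a) qualitative super-sparseness of $|l-l|$ already holds for all $s<1/2$ by Lemma \ref{Target} alone (the target $DD$-set is Lebesgue null as soon as $2s<1$); Lemma \ref{LM2} is only needed for the quantitative rate $O(N^{27s})$, so the relevant threshold is $1/2$, not $1/(2C(\alpha))$. (b) Sinai's theorem is not used to ``replace $2s$ by $s$'' in the direction-set bound: in Theorem \ref{Thone} one forces one endpoint of each dipole into a union of $O(2^{\delta n})$ atoms, and by pigeonhole obtains of order $2^{(1-\delta)n}$ separated opposite endpoints, so the contradiction becomes $\ubox(A_3\times A_2)\geq 1-\delta$, which is what allows the full range $s<1$. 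Note also that past $s=1/2$ one must abandon the distance-set sparse set $W(|l-l|)$ and work with the pointwise sparse sets $W(l_{u,v},a)$, which is exactly what Theorem \ref{Thone} does.

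The genuine gap is $s\in[1,2]$. You assert this ``reduces to'' the $s<1$ case via the decomposition of Sections \ref{INTRODY} and \ref{LARGER}, but that decomposition is only constructed in this paper, its dimensional use is explicitly deferred to the forthcoming \cite{BHY18}, and by itself it does not give $\ubox l_{u,v}\leq s-1$. The decomposition writes $l_{u,v}\subset\bigcup_{t\in A_2''} l_{u,v,t}$ with $\Haus A_2''\approx s-1$ and each $l_{u,v,t}$ everywhere super sparse; however, upper box dimension is not stable under arbitrary (even parametrised) unions, so to close the argument one must show uniformly in $t$, at matched scales, that $N(l_{u,v,t},2^{-N})\leq 2^{\epsilon N}$, and that a $2^{-N}$-net of $A_2''$ of cardinality at most $2^{(s-1+\epsilon)N}$ indexes the relevant translates. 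Corollary \ref{UniBox} supplies the right kind of uniformity in the intercept $v$, but the assembly is non-trivial and not carried out in this paper. As written, your proof attempt stops exactly where the paper itself stops, and so does not yield the cited theorem in its stated generality.
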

	Here $\ubox$ is the upper box dimension and it will be discussed in Section \ref{DIM}. The above theorem is a significant step towards Conjecture \ref{Conj}. Notice that when $\Haus{A_2}+\Haus{A_3}<1$ then we have $\ubox A_2\cap A_3=0.$ This indicates that $A_2\cap A_3$ is small which strongly supports Conjecture \ref{Conj}. From here, one might be curious about whether anything can be obtained beyond dimension zero. Indeed, this is one of the main focus of this paper. There are other recent results around Furstenberg's intersection problem. See \cite{Al20}, \cite{Au20}, \cite{BHY18}, \cite{Y20} for further details.
	
	We will show a stronger version of Furstenberg's intersection result. In the statement, we encounter the notions of the Hausdorff dimension ($\Haus$), the Assouad dimension ($\Assouad$), densities and sparseness and invariant sets. They are defined and discussed in details in Sections \ref{DIM}, \ref{DEN}, \ref{SPAR} and Section \ref{INV}. For concreteness, the results we list here are about $\times 2,\times 3\mod 1$ invariant sets. They still hold if we replace $2,3$ by $p,q$ respectively such that $\log p/\log q\notin\mathbb{Q}.$ The bound $O(N^{27s})$ below needs to be changed to $O(N^{C(p,q)s})$ with constants $C(p,q)$ depending on $p,q.$ 
	
	We shall see that being sparse implies having zero dimension. The advantage of considering sparseness is that it provides us with more quantitative understanding of the size of sets. For example, instead of considering an individual intersection, one can consider all intersections at once and the box-counting numbers can be controlled in a uniform way. We first introduce the following notion of spareness which provides us with a quantitative view of the 'topological smallness' of a compact subset of a line. 
	\begin{defn}\label{SPA}
		Let $A\subset\mathbb{R}$ be a compact set. We see that $A$ is (super) sparse near $0$ if 
		\[
		W(A)=\{k\in\mathbb{N}: A\cap ([2^{-k-1},2^{-k}]\cup [-2^{-k},-2^{-k-1}])\neq \emptyset\}
		\]
		has upper (Banach) density $0$. We call $W(A)$ the sparse index of $A$ near $0$. More generally given any $a\in A$, we define the sparse index of $A$ near $a$ by $W(A,a)=W(A-a)$ and we say that $A$ is (super) sparse near $a$ if and only if $A-a$ is (super) sparse near $0$. In general when $l\subset\mathbb{R}^2$ is contained in a line not parallel with the $Y$-coordinate axis, we define $W(A,a)$ as
		\[
		W(A,a)=W(\pi_Y(A),\pi_Y(a)).
		\]
	\end{defn} 
	Now we state the main results of this paper.
	\begin{thm}\label{THMHALF}
		Let $A_2,A_3$ be closed $\times 2, \times 3$ invariant sets respectively and $\Haus A_2+\Haus A_3=s<1/2$. Then let $l=l_{u,v}=A_2\cap (u A_3+v), u\neq 0$ be an intersection. The distance set $|l-l|$ is supersparse near $0$. Moreover, we have the following bound which is uniform with respect to $k\in\mathbb{N},$
		\[
		\#(W(|l-l|)\cap [k+1,k+N])=O(N^{27s}).
		\]
	\end{thm}
	\begin{thm}\label{THMONE}
		Let $A_2,A_3$ be closed $\times 2, \times 3$ invariant sets respectively and $\Haus A_2+\Haus A_3<1$. Then let $l_{u,v}=A_2\cap (u A_3+v), u\neq 0$ be an intersection. For all $a\in l_{u,v}$, $l_{u,v}$ is super sparse near $a$. Moreover, for each $\gamma>0,$ the following bound is uniform with respect to $|u|\in (\gamma,\gamma^{-1})$, $a\in l_{u,v}, k\in\mathbb{N},$
		\[
		\#(W(l_{u,v},a)\cap [k+1,k+N])=o(N).
		\]
	\end{thm}
	See Section \ref{DEN} for the meaning of densities. A direct consequence of the above theorems is the following corollary on a uniform version of box dimension estimate. This follows by applying Theorem \ref{THMONE}, Proposition \ref{USPA} and the discussions below Proposition \ref{USPA}. We remark that this uniform box dimension estimate is very useful in some problems concerning numbers with restricted digits, see \cite{BHY18}. 
	\begin{cor}[A uniform estimate for the upper box dimension]\label{UniBox}
		Let $A_2,A_3$ be closed $\times 2, \times 3$ invariant sets respectively and $\Haus A_2+\Haus A_3<1$. For each pair of real numbers $u,v$ with $u\neq 0,$ let $l_{u,v}=A_2\cap (u A_3+v), u\neq 0$ be the intersection. Let $\gamma>0$ be fixed. For each $\epsilon>0$ there is an integer $N_\epsilon$ such that whenever $|u|\in (\gamma,\gamma^{-1}), v\in\mathbb{R}$
		\[
		N(l_{u,v},2^{-N})\leq 2^{\epsilon N}
		\]  
		for all $N\geq N_\epsilon.$
	\end{cor}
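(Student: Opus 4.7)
The plan is to combine three ingredients already supplied in the paper: (i) Theorem \ref{Thone}, which gives a bound $\#(W(l_{u,v},a)\cap[k+1,k+N])=o(N)$ that is uniform in $|u|\in(\gamma,\gamma^{-1})$, $v\in\mathbb{R}$, $a\in l_{u,v}$ and $k\in\mathbb{N}$; (ii) Proposition \ref{USPA}, which turns super-sparseness of a set near a point into a vanishing Assouad-type covering estimate; and (iii) the trivial containment $l_{u,v}\subset A_2\subset[0,1]$, which allows a unit ball centered at any point of $l_{u,v}$ to swallow the entire intersection. The box-counting number on $[0,1]$ is thus dominated by a single Assouad-type covering number.

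The first step is to make the $o(N)$ rate quantitative. Given $\epsilon>0$, the uniformity in Theorem \ref{Thone} furnishes an integer $N_0=N_0(\epsilon,\gamma)$ such that for every $N\geq N_0$ and every admissible quadruple $(u,v,a,k)$,
\[
\#\bigl(W(l_{u,v},a)\cap[k+1,k+N]\bigr)\leq \epsilon N/C,
\]
where $C$ is a constant absorbing the sparseness-to-covering conversion in Proposition \ref{USPA}. The point is that both $N_0$ and $C$ depend only on $\epsilon$ and $\gamma$, never on the individual $(u,v,a)$, since the hypothesis itself is uniform.

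The second step is to run the proof of Proposition \ref{USPA} with this quantitative input, applied at $k=0$ and at an arbitrary base point $a\in l_{u,v}$. Because that proof uses only the abstract super-sparseness data and the ambient structure of the line, the resulting Assouad-type estimate inherits the uniformity of the hypothesis: there exists $N_\epsilon=N_\epsilon(\epsilon,\gamma)$ such that
\[
N\bigl(l_{u,v}\cap B(a,1),2^{-N}\bigr)\leq 2^{\epsilon N}
\quad\text{for all } N\geq N_\epsilon,
\]
uniformly in admissible $(u,v)$ and $a\in l_{u,v}$. Since $l_{u,v}\subset[0,1]\subset B(a,1)$ whenever $l_{u,v}\ne\emptyset$, this local bound is exactly the global bound $N(l_{u,v},2^{-N})\leq 2^{\epsilon N}$ demanded by the corollary. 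The empty case is vacuous.

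The main delicate point is the second step: one must verify that the proof of Proposition \ref{USPA} factors through a constant that depends only on the super-sparseness rate and not on hidden features of a particular $l_{u,v}$, so that the uniform input produces a uniform output. This is a bookkeeping exercise rather than a new substantive estimate, but it is the step where one has to be most careful, particularly because the naive annulus-by-annulus covering on the line is sensitive to where in $[0,N]$ the active scales sit; one should therefore track the argument of Proposition \ref{USPA} exactly as written and confirm that the constants $C$ and $N_\epsilon$ are determined solely by $\epsilon$ and $\gamma$.
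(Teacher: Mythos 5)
Your proposal is correct and follows essentially the same route as the paper: Theorem \ref{Thone} supplies super-sparseness uniform across $a\in l_{u,v}$, $k\in\mathbb{N}$ and $|u|\in(\gamma,\gamma^{-1})$, which the discussion immediately after Proposition \ref{USPA} converts into a covering bound that is uniform over the family. The one point you flag but leave open---that the constants in Proposition \ref{USPA} depend only on the sparseness rate and not on the particular set---is settled in the paper by the observation that the doubling constant $D$ from Theorem \ref{DOUB} is an absolute constant for $\mathbb{R}$, independent of the compact set; choosing a sparseness rate $\epsilon'$ with $D^{\epsilon'}\le 2^{\epsilon}$ then yields $N(l_{u,v},2^{-N})\le 2^{\epsilon N}$ directly.
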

	A key tool for proving the above results is the notion of sparseness. This is where we make use of the notation $W(.)$( see Section \ref{SPAR}). Our notion of sparseness is a very natural indicator which says that a set is small. We think this topic might be interesting on its own and we will give a detailed treatment of the notion of sparseness. In Section \ref{SPAR} we will prove some relations between sparseness and fractal dimensions. In particular, we show that being sparse implies having zero dimension but the converse is in general not true. In particular, we have the following results as consequences:
	\begin{itemize}
		\item By Proposition \ref{LSPA}: If $\Haus A_2+\Haus A_3=s<1/27$ then $\mathcal{H}^g(l_{u,v})=0$ for the gauge function $g(x)=\exp(-(-\log x)^{27s}).$
		\item By Proposition \ref{USPA}: If $\Haus A_2+\Haus A_3<1$ then $\Assouad l_{u,v}=0.$
	\end{itemize}
	In other words, $l_{u,v}$ is actually far away from having positive dimension if $\Haus A_2+\Haus A_3$ is small. The above weaker consequences already revisit partially the results (see Theorem \ref{SW} above) of \cite{Sh} and \cite{Wu} on Furstenberg's intersection conjecture. We note here that one can modify Wu's and Shmerkin's methods to show that $\Assouad l_{u,v}=0$ as well. 
	\subsection{Wu's ergodic sampling theorem}
	A key step for Wu's approach of Furstenberg's intersection problem is an ergodic sampling theorem proved as \cite[Theorem 6.1]{Wu}. The precise statement is rather technical and we will give a detailed discussion in Section \ref{SINAI}. Here we only provide some heuristics. Let $S=\{x_n\}_{n\geq 1}$ be a sequence of numbers in $[0,1]$, and we want to sample this sequence by taking a subsequence $S'=\{x_{n_{k}}\}_{k\geq 1}.$ Suppose that $S$ equidistributes in $[0,1],$ and taking a random subsequence $S'$ in a Bernoulli manner (by tossing a coin for each $x_n$ and decide whether or not put it into $S'$) will almost surely keep the equidistribution property. Now, instead of randomly (in a Bernoulli manner) choosing the subsequence $S'$, we can choose it according to some returning time with respect to an auxiliary dynamical system $(Y,T)$ given $y\in Y$ and $A\subset Y:$
	\[
	n_k=\text{the integer $i$ such that $T^{i}(y)\in A$ for the $k$-th time}.
	\]
	If $(Y,T)$ is a `complicated' system, then we expect that some randomness can be extract and the subsequence $S'$ will be still nicely distributed. \cite[Theorem 6.1]{Wu} provides a precise result when $S$ is an irrational rotation orbit and $(Y,T)$ is a dynamical system with positive entropy. We will provide a generalization of this result, see Theorem \ref{Cor}. Essentially, our result allows us to take $S$ to be a subsequence rather than the whole irrational rotation orbit.  Although not straightforwardly, we remark that one can actually modify Wu's proof in \cite{Wu} for proving Theorem \ref{Cor}. We decide to include a full detailed proof for this theorem in Section \ref{SINAI}. There are other results in this direction, see \cite{Au20} and \cite{Yu19}.
	
	\section{structure of this paper}
	In Section \ref{Notation}, we briefly recall some basic terminology from dynamical systems, notions of dimensions and densities of integer sequences. In Section \ref{Pre}, we introduce notions of sparseness and their connections with fractal dimensions. We point out the importance of Section \ref{Dip}, the dipole direction structure will be useful later. In Section \ref{IRR}, we prove some target hitting estimates using discrepancy theory and use it in Section \ref{SMALL} for proving Theorem \ref{THMHALF}. We present in Section \ref{SINAI} a version of Sinai's factor theorem which is closely related to but different than the version which appeared in \cite[Section 6]{Wu}. Then finally in Section \ref{LARGE} we proof Theorem \ref{THMONE}.
	\section{notation}\label{Notation}
	\subsection{Filtrations, atoms and entropy}
	Let $X$ be a set with $\sigma$-algebra $\mathcal{X}.$ A filtration of $\sigma$-algebras is a sequence $\mathcal{F}_n\subset\mathcal{X},n\geq 1$ such that
	\[
	\mathcal{F}_1\subset \mathcal{F}_2\subset\dots \subset \mathcal{X}.
	\]
	Given a measurable map $S:X\to X$ and a finite measurable partition $\mathcal{A}$ of $X$, we denote $S^{-n} \mathcal{A}$ to be the following finite collection of sets (notice that $S$ might not be invertible)
	\[
	\{S^{-n}(A) :A\in \mathcal{A}\}.
	\]
	Then we use $\vee_{i=0}^{n-1} S^{-i}\mathcal{A}$ to denote the $\sigma$-algebra generated by $S^{-i} \mathcal{A},i\in [0,n-1].$ An atom in $\vee_{i=0}^{n-1} S^{-i}\mathcal{A}$ is a set $A$ that can be written as
	\[
	A=\bigcap_{i} C_i
	\]
	where for each $i\in\{0,\dots,n-1\}$,  $C_i\in S^{-i}\mathcal{A}.$ In other words, an atom in $\vee_{i=0}^{n-1} S^{-i}\mathcal{A}$ can be also described as follows. Given a sequence $\{A_i\}_{i=0}^n\in \mathcal{A}^{n+1}$, we define the following set (which can be empty)
	\[
	\{x\in X: x\in A_0, S(x)\in A_1, \dots, S^n(x)\in A_n\}.
	\]  
	The above set is an atom and all atoms have the above form. In this sense $\vee_{i=0}^{n-1} S^{-i}\mathcal{A}$ is generated by a finite partition $\mathcal{A}_{n-1}$ of $X$ which is finer than $\mathcal{A}.$ Let $\mu$ be a probability measure. Then we define the entropy of $\mu$ with respect to a finite partition $\mathcal{A}$ as follows
	\[
	H(\mu,\mathcal{A})=-\sum_{A\in\mathcal{A}} \mu(A)\log \mu(A).
	\]
	We define the entropy of $S$ as follows
	\[
	h(S,\mu)=\lim_{n\to\infty} \frac{1}{n} H(\mu, \mathcal{A}_{n-1}),
	\]
	where $\mathcal{A}$ is a partition such that $\vee_{i=1}^\infty S^{-i}\mathcal{A}=\mathcal{X}.$ Here we implicitly used Sinai's entropy theorem, see \cite[Lemma 8.8]{PY}.
	\subsection{Dynamical systems and factors}
	A measurable dynamical system is denoted as $(X,\mathcal{X},S,\mu)$ where $X$ is a set with $\sigma$-algebra $\mathcal{X}$ and measure $\mu$ and a measurable map $S:X\to X.$ In case when $\mathcal{X}$ is clear in context (for example Borel $\sigma$-algebra in Borel spaces) then we do not explicitly write it down. Given two dynamical systems $(X,\mathcal{X},S,\mu),(X_1,\mathcal{X}_1,S_1,\mu_1)$, a measurable map $f:X\to X_1$ is called a factorization map and $(X_1,\mathcal{X}_1,S_1,\mu_1)$ is called a factor of $(X,\mathcal{X},S,\mu)$ if $\mu_1=f\mu$ and $f\circ S=S_1\circ f.$
	\subsection{Dynamics on product sets and components}
	Let $(X,S,\mu)$ be a measurable dynamical system with $X=X_1\times X_2.$ Denote the projection function $\pi_1: X\to X_1.$ Then the $X_1$ component of the measure $\mu$ is the projected measure $\pi_1 \mu.$ Let $\mathcal{A}$ be a collection of subsets of $X$. The $X_1$ component of $\mathcal{A}$ is $\pi_1 \mathcal{A}.$ In the case when $S$ is a product or skew-product of maps, namely, for $(x_1,x_2)\in X$, $S(x_1,x_2)=(S_1(x_1),S_2(x_1,x_2)),$ then $(X_1,S_1,\pi_1 \mu)$ is a factor of $(X,S,\mu)$ and $\pi_1 \mu$ is $S_1$-invariant if $\mu$ is $S$-invariant. We call $(X_1,S_1,\pi_1 \mu)$ the $X_1$ component of $(X,S,\mu).$ 
	
	\subsection{Equidistribution}
	Let $X=\{x_n\}_{n\geq 1}$ be a sequence in $[0,1].$ We say that $X$ equidistributes in $[0,1]$ if for each interval $[a,b]\subset [0,1]$ we have the following result,
	\[
	\lim_{N\to\infty} \frac{1}{N} \sum_{n=1}^N \mathbbm{1}_{[a,b]}(x_n)=b-a=\lambda([a,b]),
	\]
	where $\lambda$ is the Lebesgue measure on $[0,1].$
	\subsection{Dimensions}\label{DIM}
	We will encounter (and have encountered) in this paper various notions of fractal dimensions. We briefly introduce the definitions.  For more details on the Hausdorff and box dimensions, see \cite[Chapters 2,3]{Fa} and \cite[Chapters 4,5]{Ma1}. For the Assouad dimension, see \cite{F}. We shall use $N(F,r)$ for the minimal covering number of a set $F$ in $\mathbb{R}^n$ with cubes of side length $r>0$. 
	
	\subsubsection{Hausdorff dimension}
	
	Let $g: [0,1)\to [0,\infty)$ be a continuous function such that $g(0)=0$. Then for all $\delta>0$ we define the following quantity
	\[
	\mathcal{H}^g_\delta(F)=\inf\left\{\sum_{i=1}^{\infty}g(\mathrm{diam} (U_i)): \bigcup_i U_i\supset F, \mathrm{diam}(U_i)<\delta\right\}.
	\]
	The $g$-Hausdorff measure of $F$ is
	\[
	\mathcal{H}^g(F)=\lim_{\delta\to 0} \mathcal{H}^g_{\delta}(F).
	\]
	When $g(x)=x^s$ then $\mathcal{H}^g=\mathcal{H}^s$ is the $s$-Hausdorff measure and Hausdorff dimension of $F$ is
	\[
	\Haus F=\inf\{s\geq 0:\mathcal{H}^s(F)=0\}=\sup\{s\geq 0: \mathcal{H}^s(F)=\infty          \}.
	\]
	\subsubsection{Box dimensions}
	The upper box dimension of a bounded set $F$ is
	\[
	\overline{\boxd} F=\limsup_{r\to 0} \left(-\frac{\log N(F,r)}{\log r}\right).
	\]
	Similarly the lower box dimension of $F$ is
	\[
	\lbox F=\liminf_{r\to 0} \left(-\frac{\log N(F,r)}{\log r}\right).
	\]
	If the limsup and liminf are equal we call this value the box dimension of $F$ and we denote it as $\boxd F.$
	\subsubsection{Assouad and modified Assouad dimensions}
	The Assouad dimension of $F$ is 
	\begin{align*}
	\Assouad F = \inf \Bigg\{ s \ge 0 \, \, \colon \, (\exists \, C >0)\, (\forall & R>0)\,  (\forall r \in (0,R))\, (\forall x \in F) \\ 
	&N(B(x,R) \cap F,r) \le C \left( \frac{R}{r}\right)^s \Bigg\},
	\end{align*}
	where $B(x,R)$ denotes the closed ball of centre $x$ and radius $R$.
	
	The modified Assouad dimension of $F$ is
	\[
	\pAssouad F=\inf\left\{\sup_{i\in\mathbb{N}}\{\Assouad F_i\}: F\subset\cup_i F_i\right\}.
	\]
	In particularly any countable set has modified Assouad dimension $0$ and it is easy to see that
	\[
	\pAssouad F\leq \Assouad F.
	\] 
	
	\subsection{$\times p\mod 1$ invariant sets}\label{INV}
	In this paper, given an integer $p\geq 2$, we use $A_p$ to denote an arbitrary closed $\times p\mod 1$ invariant subset of $[0,1]$. This is to say, for all $a\in A_p$, $\{pa\}\in A_p$, where $\{x\}$ is the fractional part of $x$. We say that $A_p$ is strictly invariant if $a\in A_p\iff \{pa\}\in A_p.$ For each closed $\times p\mod 1$ invariant set $A_p$, it is known (\cite[Theorem 5.1]{Fu}) that $\Haus A_p=\ubox A_p$. In particular for any integers $p,q\geq 2$, $\Haus A_p\times A_q=\ubox A_p\times A_q.$
	
	\subsection{Densities of integer sequences}\label{DEN}
	We also work with various notions of densities of integer sequences. We recall two notions of density for integer sequences.
	\begin{defn}
		The upper natural density of $W$ is defined as
		\[
		\overline{d}(W)=\limsup_{n\to\infty} \frac{\#\{W\cap [1,n]\}}{n}.
		\]
		Similarly, we define the lower natural density by replacing the above $\limsup$ with $\liminf$ and write it as $\underline{d}(W)$. If these two numbers coincide we call it the natural density of $W$ and write it as $d(W).$
	\end{defn}
	
	\begin{defn}
		The upper Banach density of $W$ is defined as
		\[
		d_B(W)=\limsup_{k,M\to\infty} \frac{1}{k}(\#\{W\cap [M,M+k-1]\}).
		\]
	\end{defn}
	\subsection{The big $O$ and small $o$ notations}
	Let $f,g: \mathbb{N}\to [0,\infty)$ be two functions. We write
	\[
	f=O(g)
	\]
	if there exists positive number $C>0$ such that
	\[
	f(k)\leq Cg(k)
	\]
	for all $k\in\mathbb{N}.$
	Similarly we write
	\[
	f=o(g)
	\]
	if for any $\epsilon>0$ there exists $N\in\mathbb{N}$ such that for all $k\geq N$ we have
	\[
	f(k)\leq \epsilon g(k).
	\]
	In some occasions there is another parameter set $S$ and we have functions $f,g:\mathbb{N}\times S\to [0,\infty).$ For each $c\in S$  we write $f=O_c(g), o_c(g)$ to indicate that the above tendencies depend on the choice of $c.$ We say that $f=O(g), o(g)$ uniformly for $c\in S$ if the above tendencies do not depend on the choice of $c.$
	\subsection{Weak convergence of measures and the Portmanteau theorem}
	In Section \ref{LARGE}, we need the notion of weak * convergence of measures and the Portmanteau theorem. Let $\mu_{k},k\geq 1$ be a sequence of probability measures on a Borel space $X.$ We say that $\mu_k\to \mu$ in weak * sense (or weakly) if for all bounded continuous functions $f:X\to\mathbb{R}$ 
	\[
	\lim_{k\to\infty} \int_X fd\mu_{k}=\int_X fd\mu.
	\]
	The following version of the Portmanteau theorem is taken from \cite[Theorem 13.16]{K06} and \cite[Theorem 1.3]{Su14}.
	\begin{thm}[Portmanteau theorem]\label{PORT}
		Let $\mu_k,k\geq 1$ be a sequence in $\mathcal{P}(X)$ (the space of Borel probability measures supported on $X$) where $X$ is a Borel space. Let $\mu\in \mathcal{P}(X).$ The following statements are equivalent:
		\begin{itemize}
			\item[1]: $\mu_k\to\mu$ weakly;
			\item[2]: $\limsup_{k} \mu_k(K)\leq \mu(K)$ for all closed subsets $K$ of $X$;
			\item[3]: $\lim_{k\to\infty} \int_X fd\mu_k=\int_X fd\mu$ for bounded and $\mu$-almost everywhere continuous real valued functions $f$ on $X.$
		\end{itemize}
	\end{thm}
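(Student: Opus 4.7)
The plan is to establish the equivalences cyclically: $1 \Rightarrow 2 \Rightarrow 3 \Rightarrow 1$. Statement $3$ trivially implies $1$ because every bounded continuous function is continuous $\mu$-almost everywhere, so the reverse direction $3 \Rightarrow 1$ requires no work. The substance therefore lies in the two forward implications.

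For $1 \Rightarrow 2$, let $K \subset X$ be closed. The strategy is to sandwich $\mathbbm{1}_K$ between continuous test functions from above. Define $f_n(x) = \max\{1 - n\,\mathrm{dist}(x,K), 0\}$, which is continuous, bounded by $1$, and satisfies $f_n \downarrow \mathbbm{1}_K$ as $n \to \infty$ (here I would implicitly use that $X$ is metrizable, or more generally invoke Urysohn-type approximation in the Borel space setting the paper admits). Then for each fixed $n$,
\[
\limsup_k \mu_k(K) \le \limsup_k \int f_n \, d\mu_k = \int f_n \, d\mu,
\]
by weak convergence. Sending $n \to \infty$ and applying monotone (or dominated) convergence on the right yields $\int f_n \, d\mu \to \mu(K)$, giving the desired inequality.

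For $2 \Rightarrow 3$, the first step is to record the open-set dual: taking complements in $2$ gives $\liminf_k \mu_k(U) \ge \mu(U)$ for every open $U$. From these two, one deduces that $\mu_k(B) \to \mu(B)$ whenever $B$ is a $\mu$-continuity set, meaning $\mu(\partial B) = 0$. Now given $f$ bounded and $\mu$-a.e.\ continuous, write $f = f^+ - f^-$ and it suffices to treat $f \ge 0$ with $\|f\|_\infty \le M$. The key observation is that for all but countably many $t \in (0,M)$, the superlevel set $\{f > t\}$ is a $\mu$-continuity set: its topological boundary is contained in $\{f = t\} \cup D_f$, where $D_f$ is the (null) discontinuity set of $f$, and the slices $\{f = t\}$ can carry positive $\mu$-mass for at most countably many $t$. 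Using the layer-cake identity
\[
\int f \, d\mu_k = \int_0^M \mu_k(\{f > t\}) \, dt,
\]
bounded convergence in $t$ (justified by $\mu_k(\{f > t\}) \le 1$) combined with pointwise convergence $\mu_k(\{f > t\}) \to \mu(\{f > t\})$ at almost every $t$ finishes the implication.

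The principal obstacle is the passage to $\mu$-a.e.\ continuous integrands in step $2 \Rightarrow 3$; merely bounded continuous $f$ would be straightforward from $2$ via the standard sandwich argument, but to accommodate the discontinuity set $D_f$ one must exploit that $\partial\{f > t\} \subset \{f = t\} \cup D_f$ and that $\mu(D_f) = 0$ by hypothesis. Aside from this, a minor technical care is needed to ensure the approximation $f_n \downarrow \mathbbm{1}_K$ in $1 \Rightarrow 2$ is available in the Borel space $X$; in a metric setting this is automatic via the distance function, while the general formulation requires invoking the regularity properties of the Borel space so that closed sets admit continuous approximants from above.
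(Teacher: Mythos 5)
The paper does not supply a proof of this theorem: it is explicitly imported as a cited result, with references to Klenke \cite{K06} and Sun \cite{Su14} given immediately before the statement. There is therefore no internal argument to compare your proposal against; what can be assessed is only the correctness of your blind proof.

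Your argument is a correct standard proof of the Portmanteau equivalences. The cycle $1 \Rightarrow 2 \Rightarrow 3 \Rightarrow 1$ is well chosen: $3 \Rightarrow 1$ is indeed immediate, $1 \Rightarrow 2$ is the standard sandwich with $f_n(x) = \max\{1-n\,\mathrm{dist}(x,K),0\}$, and the layer-cake reduction for $2 \Rightarrow 3$ is the right tool to handle $\mu$-a.e.\ continuous integrands. The key inclusion $\partial\{f>t\}\subset\{f=t\}\cup D_f$ is correct (a continuity point $x$ on the boundary is simultaneously the limit of points where $f>t$ and $f\le t$, forcing $f(x)=t$), and the countability of the exceptional set $\{t:\mu(\{f=t\})>0\}$ then makes almost every $\{f>t\}$ a $\mu$-continuity set, so $\mu_k(\{f>t\})\to\mu(\{f>t\})$ for a.e.\ $t$ by the open/closed dual pair from $2$. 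Bounded convergence in $t$ finishes it. You rightly flag the one genuine hypothesis issue: the paper says ``Borel space'' without fixing a metric, and the approximation $f_n\downarrow\mathbbm{1}_K$ uses a distance function; in practice the paper only ever applies this theorem with $X$ a compact metric space (products of $[0,1]$'s), so the metrizability you implicitly assume is available in every use the paper makes of the result. One small point worth making explicit in a polished write-up: you should assume $f$ Borel measurable (which the paper surely intends by ``real valued functions''), since $\mu$-a.e.\ continuity alone only gives $\mu$-measurability, whereas the quantities $\int f\,d\mu_k$ and $\mu_k(\{f>t\})$ require measurability with respect to every $\mu_k$.
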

	There are a lot of other equivalent statements for the Portmanteau theorem, for more details, see \cite{Su14} and the references therein. One particular use of the above result is related to invariant measures of almost continuous dynamical systems. More precisely, let $X$ be a compact metric space. Let $T:X\to X$ be a map (not necessary continuous). For each integer $n\geq 1,$ let $x_n\in X$ be arbitrarily chosen and let $\mu_n=(n+1)^{-1}\sum_{i=0}^{n} \delta_{T^i(x_n)}$ be a sequence of probability measures on $X.$ Let $\mu$ be a weak * limit point of this sequence. In the case when $T$ is continuous, we know that $\mu$ is $T$-invariant. This is the content of Kryloff-Bogoliouboff theorem. We can extend this result if $T$ is only assumed to be $\mu$-almost everywhere continuous. In fact, for any $f\in C(X),$ we have the following result for a sequence of integers $\{i_k\}_{k\geq 1}$ such that $i_k\to\infty$,
	\[
	\int_{X} f(x) d\mu(x)=\lim_{k\to\infty} \int_X f(x)d\mu_{i_k}(x).
	\] 
	Now we want to consider the same for the function $f\circ T.$ It is continuous at where $T$ is continuous. Then we see that $f\circ T$ is $\mu$-almost everywhere continuous. We have the following result (by Theorem \ref{PORT}(3)),
	\[
	\int_{X} f(T(x)) d\mu(x)=\lim_{k\to\infty} \int_X f(T(x))d\mu_{i_k}(x)=\lim_{k\to\infty}\frac{1}{i_k+1}\sum_{i=0}^{i_k} f(T(T^{i}(x_{i_k}))).
	\]
	The last term is equal to
	\[
	\lim_{k\to\infty}\frac{1}{i_k+1}\left(\sum_{i=0}^{i_k} f(T^{i}(x_{i_k}))+f(T^{i_k+1}(x_{i_k}))-f(x_{i_k})\right)
	\]
	which is the same as (recall that $f$ is bounded)
	\[
	\lim_{k\to\infty}\frac{1}{i_k+1}\sum_{i=0}^{i_k} f((T^{i}(x_{i_k}))=\int_{X} f(x) d\mu(x).
	\]
	This shows that $\mu$ is $T$-invariant. In general, it is not simple to show that $T$ is $\mu$-almost everywhere continuous. There are some special cases when this is possible to be checked, see for example \cite[Section 5.2]{Wu}.
	\section{Sparseness and Dipoles}\label{Pre}
	In this section, we introduce our key tools for approaching Furstenberg's intersection problem. The ideas behind the definitions are very natural and straightforward, however, we are not able to find any direct references. We decide to give a detailed treatment which will be more than what we need for Furstenberg's intersection problem.
	\subsection{Doubling measures}
	Later we shall use some facts about doubling measures. Here we are interested in doubling measures supported on compact subsets of $\mathbb{R}.$ We have the following result. The proofs can be found in \cite{VK}, \cite[Theorem 6.10]{Lu} and \cite{KRS}.
	\begin{thm}\label{DOUB}
		Let $A\subset\mathbb{R}$ be a compact set. Then there is a doubling probability  measure supported on $A$. Namely, there is a measure $\mu\in\mathcal{P}(A)$ and there exists an absolute constant (called the doubling constant for $\mathbb{R}$) $D\geq 1$ such that for all $a\in A$ and $r>0$,
		\[
		0<\mu(B(x,2r))\leq D\mu(B(x,r))< \infty.
		\] 
	\end{thm}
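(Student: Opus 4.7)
The plan is to follow the Vol'berg--Konyagin construction tailored to the line. After an affine normalization we may assume $A\subset [0,1]$, which affects neither the doubling property nor the doubling constant. I would then construct $\mu$ as a weak--$\ast$ limit of a sequence of probability measures $\mu_k$ built from the dyadic hierarchy, and verify the doubling inequality at every scale using the dyadic skeleton.

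Let $\mathcal{D}_k$ be the standard dyadic intervals of length $2^{-k}$ in $[0,1]$ and set $\mathcal{A}_k=\{I\in\mathcal{D}_k : I\cap A\neq \emptyset\}$. The aim is to attach a positive weight $w_k(I)$ to each $I\in\mathcal{A}_k$, with $\sum_{I\in\mathcal{A}_k} w_k(I)=1$ and $w_{k-1}(\hat I)=\sum_{J\subset \hat I,\ J\in\mathcal{A}_k} w_k(J)$, so that the associated measures $\mu_k=\sum_{I\in\mathcal{A}_k} w_k(I)\,\delta_{x_I}$ (with $x_I$ any chosen point of $I\cap A$) form a consistent martingale. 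The crucial requirement is that adjacent active intervals at the same scale carry comparable weight, uniformly in $k$ and in $A$. The naive rule of splitting each parent's mass equally among its active children is not enough: an interval that loses one of its two children to $A^{c}$ hands its entire mass to the surviving child, and iteration of this can blow up the ratio of neighboring weights. To fix this, after each generation I would apply a local rebalancing step in which, for every pair of adjacent intervals $I,J\in\mathcal{A}_k$, the weights are moved toward their geometric mean subject to a fixed absolute cap $\lambda\in (1,2)$ on the allowed ratio $w_k(I)/w_k(J)$, with the corresponding adjustment propagated back up the tree so that the martingale property is restored.

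Granted that the weights satisfy $w_k(I)/w_k(J)\in[\lambda^{-1},\lambda]$ whenever $I,J\in\mathcal{A}_k$ share a boundary point, I would pass to a weak--$\ast$ accumulation point $\mu$ of $(\mu_k)$, which exists by the compactness of $\mathcal{P}([0,1])$, is supported on $A$ because $A$ is closed and the atoms $x_I$ lie in $A$, and for every $I\in\mathcal{A}_k$ satisfies $\mu(I)=w_k(I)$ by the martingale consistency. The doubling inequality for an arbitrary ball $B(x,r)$ with $x\in A$ is then reduced to the dyadic case: choose $k$ so that $2^{-k}\le r<2^{-k+1}$, observe that $B(x,r)$ contains at least one $I\in\mathcal{A}_k$ incident to $x$, while $B(x,2r)$ is covered by a bounded number (depending only on the ambient dimension $1$) of intervals in $\mathcal{A}_k$, each of them linked to $I$ by a bounded chain of adjacent members of $\mathcal{A}_k$, so the ratio telescopes to an absolute power of $\lambda$.

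The main obstacle is the rebalancing step: one needs a scheme that simultaneously enforces the ratio bound on $w_k$ across every adjacency, preserves positivity, and remains compatible with the constraint inherited from coarser scales, all while the bound $\lambda$ stays independent of $A$. This is precisely the content of the inductive argument in \cite{VK} and \cite{Lu}, and is where the absoluteness of $D$ in the statement of Theorem \ref{DOUB} genuinely comes from; verifying that the local geometric--mean adjustment can be iterated consistently up the dyadic tree is the step I would expect to consume most of the work, while the passage to the weak limit and the translation from dyadic intervals to balls are routine.
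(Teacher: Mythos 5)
The paper does not prove Theorem \ref{DOUB} at all: it is stated as a known result with references to \cite{VK}, \cite[Theorem 6.10]{Lu} and \cite{KRS}, so there is no internal proof to compare your attempt with. Your sketch does identify the right general framework (dyadic tree, top--down weight assignment, compactness of $\mathcal{P}([0,1])$), and you are correct that naive equal splitting fails. However, the specific repair you propose is not merely incomplete --- the constraint it aims to enforce is actually false for doubling measures, so no rebalancing scheme can produce it.

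Concretely, your plan requires that any two \emph{adjacent} active intervals $I,J\in\mathcal{A}_k$ at the same scale carry weights with ratio at most an absolute $\lambda$. Take $A=[0,\tfrac12]\cup\{\tfrac12+\epsilon\}$ with $\epsilon$ tiny. Let $\mu$ be \emph{any} probability measure on $A$ with doubling constant $D$. Applying doubling at the point $\tfrac12+\epsilon$ forces $\mu(\{\tfrac12+\epsilon\})\asymp_D\mu([\tfrac12-\epsilon,\tfrac12])$, and applying doubling repeatedly inside $[0,\tfrac12]$ gives $\mu([\tfrac12-2^{-k},\tfrac12])\gtrsim (2^{-k}/\epsilon)^{c(D)}\mu([\tfrac12-\epsilon,\tfrac12])$ whenever $2^{-k}\gg\epsilon$. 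Hence for such $k$ the two adjacent active intervals $I=[\tfrac12-2^{-k},\tfrac12]$ and $J=[\tfrac12,\tfrac12+2^{-k}]$ satisfy $\mu(I)/\mu(J)\gtrsim (2^{-k}/\epsilon)^{c(D)}$, which is unbounded as $2^{-k}/\epsilon\to\infty$. So the adjacent--ratio bound $\lambda$ is incompatible with $\mu$ being any doubling probability measure on this $A$, and the martingale constraint $\sum_{J\subset\hat I}w_k(J)=w_{k-1}(\hat I)$ together with your cap $\lambda$ is over--determined: the back--propagation you invoke would have to reassign the level--$1$ split between $[0,\tfrac12]$ and $[\tfrac12,1]$ based on information that only becomes visible at level $k\approx\log_2(1/\epsilon)$, and iterating this produces exactly the kind of non--terminating regress you flag at the end.

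A secondary gap: you assert that every active interval in $B(x,2r)$ is linked to $I_x$ by a bounded chain of \emph{adjacent members of $\mathcal{A}_k$}. When $A$ is disconnected this is false --- the intermediate dyadic intervals may all be inactive. The fix is to chain through a few coarser generations (a common active ancestor exists at scale $k-O(1)$), but one then needs a lower bound on the per--generation weight loss $w_k(I)/w_{k-1}(\hat I)$, and in one dimension that lower bound already holds for naive splitting; the genuinely hard part, as your own example above shows, is the comparison across high dyadic boundaries, which the chain argument at a fixed scale does not address. The cited proofs (Vol'berg--Konyagin, Luukkainen Theorem 6.10, K\"{a}enm\"{a}ki--Rajala--Suomala) sidestep both issues by working with a net--based or generalized--cube hierarchy and a one--pass, non--iterative weight rule whose consistency is built in from the start, rather than enforcing a local adjacency constraint after the fact. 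I would encourage you to read \cite[Theorem 6.10]{Lu} or \cite{KRS}: the key device there is quite different from geometric--mean rebalancing and is worth internalizing.
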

	According to \cite[Section 6.13]{Lu}, the constant $D$ for $\mathbb{R}$ can be chosen to be $2\times 3\times 4\times 9^5.$
	\subsection{Sparseness}\label{SPAR}
	We call the notion of spareness (Definition \ref{SPA}).
	\begin{defn*}
		Let $A\subset\mathbb{R}$ be a compact set. We see that $A$ is (super) sparse near $0$ if 
		\[
		W(A)=\{k\in\mathbb{N}: A\cap ([2^{-k-1},2^{-k}]\cup [-2^{-k},-2^{-k-1}])\neq \emptyset\}
		\]
		has upper (Banach) density $0$. We call $W(A)$ the sparse index of $A$ near $0$. More generally given any $a\in A$, we define the sparse index of $A$ near $a$ by $W(A,a)=W(A-a)$ and we say that $A$ is (super) sparse near $a$ if and only if $A-a$ is (super) sparse near $0$. In general when $l\subset\mathbb{R}^2$ is contained in a line not parallel with the $Y$-coordinate axis, we define $W(A,a)$ as
		\[
		W(A,a)=W(\pi_Y(A),\pi_Y(a)).
		\]
	\end{defn*} 
	It is easy to see that (super) sparseness is insensitive with respect to scaling. That is to say, if $W(A,a)$ has upper natural density $0$, then for each real number $c\neq 0$, $W(cA,ca)$ also has natural density $0$. A similar result holds for upper Banach density as well.
	
	Given any set $A\subset\mathbb{R}$, we denote $|A-A|$ as its distance set. Intuitively, if $|A-A|$ is sparse near $0$ then $A$ cannot be too large. A less restrictive notion is uniform sparseness. That is to say, for each $\delta>0,$ there is an integer $N_\delta$ such that $\#W(A,a)\cap [1,\dots,N]\leq \delta N$ for all $a\in A, N\geq N_\delta.$ Similar notion of uniform super sparseness can be formulated as well. In particular, if $|A-A|$ is (super) sparse then $A$ is uniformly sparse.
	\begin{prop}\label{USPA}
		We have the following results.
		\begin{itemize}
			\item{1:} Given any uniformly sparse set $A\subset\mathbb{R},$ we have $\ubox A=0.$ 
			
			\item{2:} If $A$ is uniformly super sparse then $\Assouad A=0.$  
			
			\item{3:} Let $A\subset [0,1]$ be a compact set such that for all $a\in A$, $A$ is super sparse near $a$ then 
			\[
			\pAssouad A=0.
			\]
			\item{4:} The converses of the above are in general not true. On the other hand, if $A$ is finite then it is uniformly super sparse.
		\end{itemize}
	\end{prop}
	\begin{proof}
		The proofs for (1) and (2),(3) are very similar and we only write the proof for (1). First observe that the last conclusion of (4) is trivial. We now illustrate the first part of (4). Let $A_0$ be the set $\{0\}\cup\{2^{-k}\}_{k\geq 0}\subset [0,1]$. We see that $\ubox A_0=0$ but we can see that $A_0$ is not sparse near $0$ and therefore it is not uniformly sparse. Now we consider (1). Let $A\in [0,1]$ be a uniformly sparse set. Then we see that the following set has $0$ upper natural density uniformly across $a\in A$,
		\[
		W(A,a)=\{k\in\mathbb{N}: |A-A|\cap [2^{-k-1},2^{-k}]\neq \emptyset\}.
		\]
		Since $A$ is compact we assume that it is contained in $[0,1]$. To bound the upper box dimension of $A$ we shall use Theorem \ref{DOUB} and find a doubling (with doubling constant $D>0$) probability measure supported on $A$.  Let $a\in A$ be arbitrarily chosen and for any integer $n\geq 0$ we can find a nested sequence of intervals $a\in B(a,2^{-n})\subset\dots\subset B(a,1).$ Since we assumed that $A\subset [0,1]$ therefore we see that $\mu(B(a,1))=1.$ Now we make use of the uniform sparseness of $|A-A|$. It is clear that if $A\cap\overline{B(a,2^{-j})\setminus B(a,2^{-j-1})}\neq\emptyset$ then $j\in W(A,a)$. This means that $A\cap B(a,2^{-j})=A\cap B(a,2^{-j-1})$ if $j\notin W(A,a).$ Then we write
		\[
		\mu(B(a,2^{-n}))=\mu(B(a,1))\prod_{j=0}^{n-1} \frac{\mu(B(a,2^{-j-1}))}{\mu(B(a,2^{-j}))}.
		\]
		If $j\notin W(A,a)$ then $A\cap\overline{B(a,2^{-j})\setminus B(a,2^{-j-1})}=\emptyset$ therefore we see that,
		\[
		\frac{\mu(B(a,2^{-j-1}))}{\mu(B(a,2^{-j}))}=1,
		\]
		otherwise if $j\in W(A,a)$ we can still write
		\[
		\frac{\mu(B(a,2^{-j-1}))}{\mu(B(a,2^{-j}))}\geq D^{-1}.
		\]
		Since $W(A,a)$ has natural density $0$ uniformly across $a\in A,$ we see that for all $\epsilon>0$ there exist a $N_\epsilon$ such that for all $a\in A, N\geq N_\epsilon$ we have
		\[
		\#W(A,a)\cap [1,N]\leq \epsilon N.
		\]
		Then we see that for all $N\geq N_\epsilon$
		\[
		\mu(B(a,2^{-N}))\geq D^{-\epsilon N}.
		\]
		We can cover $A$ with disjoint intervals of length $2^{-N-1}$ and denote the collection of such intervals as $\mathcal{N}_{N+1}$, then for any $I\in \mathcal{N}_{N+1}$ there is a $a\in I\cap A$ such that $I\subset B(a,2^{-N})$ and therefore $\mu(I)\geq D^{-\epsilon (N)}.$ Since $\mu$ is a probability measure we see that
		\[
		\#\mathcal{N}_{N+1}\leq D^{\epsilon N}.
		\]
		This implies that $\ubox A\leq \epsilon \log D/\log 2.$ and because $\epsilon$ can be arbitrarily chosen we see that $\ubox A=0.$ This finishes the proof of (1). The proofs of (2), (3) are similar and we omit the details.
	\end{proof}
	Since the doubling constant $D$ can be chosen independently with respect to $A$ we see that with the uniform sparseness assumption, for each $\epsilon>0$ there is an integer $N_\epsilon$ such that
	\[
	N(A,2^{-n})\leq D^{\epsilon n}
	\]
	for all $n\geq N_\epsilon.$ Therefore, if we have a collection of compact sets $\{A_i\}_{i\in\mathcal{I}}$ of $[0,1]$ and we assume uniform sparseness uniformly across $i\in\mathcal{I}$ then for each $\epsilon>0$ there is an integer $N_\epsilon$ and for all $n\geq N_\epsilon, i\in\mathcal{I}$ we have
	\[
	N(A_i,2^{-n})\leq D^{\epsilon n}.
	\]

	In general, if we can control $W(A,a)$ individually for all $a\in A$ then it is possible to say something about the Hausdorff measure of $A$ with respect to a certain gauge function. 
	
	\begin{prop}\label{LSPA}
		Let $A\subset [0,1]$ be a compact set. Let $f:[0,\infty)\to [0,\infty)$ be such that for all $a\in A$,
		\[
		\#W(A,a)\cap [1,N]=o_a(f(N)) \text{ as $N\to \infty.$}
		\]
		We write a gauge function as $g(x)=\exp(-f(1-\log x/\log 2))$ for $x\in (0,1).$ Then we have $\mathcal{H}^{g}(A)<\infty.$ 
	\end{prop} 
	
	\begin{proof}
		Since $A$ is compact we can find a doubling probability measure with doubling constant $D$ on it, see Theorem \ref{DOUB}. Let $c>0$ be an arbitrarily chosen constant. Then for each $a\in A$, because of the sparseness of $A$ around $a$ with a similar argument as in the proof of Lemma \ref{USPA} we see that there exists an integer $N_{a}$ such that whenever $N\geq N_{a}$ we have
		\[
		\mu(B(a,2^{-N}))\geq D^{- f(N)/c}.
		\]
		Since $A$ is compact we see that there is a finite collection of intervals of form $I_a=B(a,N_a)$ that covers $A$. By Besicovitch's covering lemma(\cite[Chapter 2, Section 7]{Ma1}) we see that there exists an absolute constant $C>0$ and for a finite subset $A'$ of $A$ such that
		\[
		\sum_{a\in A'} D^{-f(N_a)/c}\leq \sum_{a\in A'} \mu(I_a)\leq C.
		\]
		Now we choose $c=\max\{\log D,1\}.$ Notice that $f(N_a)=f(-\log 2^{-N_a}/\log 2)$ and $2^{-N_a+1}$ is the length of $D_{k_a}(a)$.  Let $r_a$ be the length of $I_a$, we see that for an absolute constant $C>0$,
		\[
		\sum_{a\in A'}\exp( - f(1-\log r_a/\log 2))\leq C.
		\]
		It is clear that we can bound $\max_{a\in A'} r_a$ to be arbitrarily small. This implies that $\mathcal{H}^g(A)<\infty.$
	\end{proof}
	In particular if $f(N)=o(N)$ then we see that $\Haus A=0$. If $f(N)=o(N^{\sigma})$ for $\sigma\in (0,1)$ then we can choose $g(x)=\exp(-(-\log x)^\sigma).$

	\subsection{Dipole set}\label{Dip}
	Let $C>1.$ Then there is a constant $c>0$ such that the following holds for all small enough $\delta>0:$
	
	Let $A\subset\mathbb{R}^2$ be a compact subset. Let $E\subset [0,2\pi]$ be a $\delta$-separated set of directions. Suppose that for each $e\in E$ we can find $x_e,y_e\in A$ such that
	\[
	|y_e-x_e|\in [C^{-1},C]
	\]
	and $y_e-x_e$ points towards the direction $e$. Then we have $N(A,\delta)\geq c\sqrt{\#E}.$ 
	
	To see this, we only need to cover $A$ with disjoint $\delta$-cubes.  As long as $\delta$ is small enough, there is a number $c'>0$ so that if there is a $\delta$-cube containing $M$ points of form $x_e$ then the corresponding $y_e$ are all at least $c'\delta$-separated from each other. Therefore we have $N(A,\delta)\geq c'M/2.$ On the other hand if non of the $\delta$-cubes contain more than $M$ many points of form $x_e$ then $N(A,\delta)\geq \#E/M.$ Then we see that for all integer $M$,
	\[
	N(A,\delta)\geq \max\left\{  c'M/2,            \#E/M        \right\}\geq \sqrt{c'/2}\sqrt{\#E}.
	\]
	\begin{defn}
		Let $A\subset\mathbb{R}^2$ be a compact subset, the dipole direction set of $A$ is defined as follows,
		\[
		DD(A)=\left\{\frac{x-y}{|x-y|}:|x-y|\in [1/6,1.5], x,y\in A\right\}.
		\]
	\end{defn}
	It is easy to see that when $A$ is compact $DD(A)$ is also compact. We have shown the following lemma.
	\begin{lma}
		For all compact subset $A\subset \mathbb{R}^2,$ we have the following result
		\[
		\ubox A\geq 0.5\ubox DD(A).
		\]
	\end{lma}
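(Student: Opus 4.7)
The plan is to apply the pigeonhole observation proved immediately above the definition of $DD(A)$ to a nearly optimal $\delta$-separated subset of $DD(A)$ itself, and then take logarithms.

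More concretely, fix a small $\delta>0$ and let $E\subset DD(A)$ be a maximal $\delta$-separated set of directions (viewing directions as points of $[0,2\pi]$ via arc length on $S^{1}$, so that $\delta$-separation of directions agrees with $\delta$-separation of unit vectors in $\mathbb{R}^{2}$ up to a bounded multiplicative constant for small $\delta$). By maximality, the open $\delta$-balls around points of $E$ cover $DD(A)$, so that $\#E\geq c_{1}\,N(DD(A),\delta)$ for an absolute constant $c_{1}>0$. By the very definition of $DD(A)$, for each $e\in E$ I can choose $x_{e},y_{e}\in A$ with $|y_{e}-x_{e}|\in[1/6,1.5]$ and $(y_{e}-x_{e})/|y_{e}-x_{e}|=e$. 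This is precisely the hypothesis required by the observation preceding the definition, which immediately yields
\[
N(A,\delta)\geq 0.1\sqrt{\#E}\geq 0.1\sqrt{c_{1}\,N(DD(A),\delta)}.
\]

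To conclude, take logarithms, divide by $-\log\delta$, and pass to the $\limsup$ as $\delta\to 0$:
\[
\frac{\log N(A,\delta)}{-\log\delta}\;\geq\;\frac{1}{2}\cdot\frac{\log N(DD(A),\delta)}{-\log\delta}+\frac{\log(0.1\sqrt{c_{1}})}{-\log\delta}.
\]
The additive term on the right vanishes in the limit, so $\ubox A\geq 0.5\,\ubox DD(A)$, as desired.

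There is no real obstacle here: the combinatorial content of the argument is exactly the pigeonhole/dipole observation stated just before the definition of $DD(A)$, which has already been sketched in the text. Everything else is the standard comparison between maximal $\delta$-separated sets and $\delta$-covering numbers, producing only a multiplicative constant that disappears after the logarithmic normalization used to compute the upper box dimension.
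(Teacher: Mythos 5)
Your argument is correct and follows exactly the route the paper intends: the pigeonhole/dipole observation immediately preceding the definition of $DD(A)$ supplies the bound $N(A,\delta)\geq 0.1\sqrt{\#E}$, and you apply it to a maximal $\delta$-separated subset $E\subset DD(A)$ (whose cardinality is comparable to $N(DD(A),\delta)$), after which the logarithmic normalization and $\limsup$ give the factor $0.5$ with the multiplicative constants disappearing. This is precisely what the paper means by ``We have shown the following lemma.''
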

	\section{Irrational rotations, discrepancy and target hitting}\label{IRR}
	
	Let $\alpha\in (0,1)$ be an irrational number. Consider the rotation system $R_{\alpha}: [0,1]\to [0,1]$ defined as follows,
	\[
	R_\alpha(x)=x+\alpha \mod 1.
	\]
	Then for any compact subset $A\subset [0,1]$, by Birkhoff's ergodic theorem(\cite[Theorem 10.6]{PY}) we see that for Lebesgue almost all $x\in [0,1],$
	\[
	\lim_{N\to\infty}\frac{1}{N} \sum_{n=0}^{N-1}\mathbbm{1}_A(R^n_\alpha(x))=\lambda(A),
	\]
	where $\lambda$ is the Lebesgue measure on $[0,1].$ This implies that when $A$ is small we expect that $n\geq 0, R^n_\alpha(x)\in A$ happens not so often. It is known that the circle rotation system with irrational $\alpha$ is uniquely ergodic therefore it is expected that $R^n_\alpha(0)\in A$ happens not so often. 
	\begin{lma}\label{Target}
		Let $A\subset [0,1]$ be a compact set and for any irrational number $\alpha\in (0,1)$ we construct the following sequence,
		\[
		W=\{k\in\mathbb{N}: R^{k}_\alpha(0)=\{k\alpha\}\in A \}.
		\]
		Then the upper Banach density of $W$ is at most $\lambda(A).$
	\end{lma}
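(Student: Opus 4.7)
The plan is to exploit the fact that irrational rotation on the circle is uniquely ergodic, so Birkhoff averages of continuous test functions converge uniformly in the starting point. The only subtlety is that $\mathbbm{1}_A$ is not continuous, but since $A$ is compact (hence its indicator is upper semicontinuous), we can sandwich it from above by a continuous function of nearly the same integral.

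\medskip

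\noindent\textbf{Step 1: Approximation from above.} Fix $\epsilon>0$. View $R_\alpha$ as acting on $S^1=\mathbb{R}/\mathbb{Z}$ so that $A$ corresponds to a compact subset. Using the outer regularity of Lebesgue measure, choose an open set $U\supset A$ with $\lambda(U)\leq \lambda(A)+\epsilon$. By Urysohn's lemma on $S^1$ there is a continuous function $f\colon S^1\to[0,1]$ supported in $U$ with $f\equiv 1$ on $A$. Thus $\mathbbm{1}_A\leq f$ pointwise and $\int f\,d\lambda\leq \lambda(U)\leq \lambda(A)+\epsilon$.

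\medskip

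\noindent\textbf{Step 2: Uniform Birkhoff averages.} Because $\alpha$ is irrational, $(S^1,R_\alpha,\lambda)$ is uniquely ergodic, so for $f\in C(S^1)$ we have
\[
\frac{1}{k}\sum_{n=0}^{k-1} f(R_\alpha^n(x))\longrightarrow \int_{S^1} f\,d\lambda\quad\text{uniformly in }x\in S^1,
\]
as $k\to\infty$. Applying this with starting point $R_\alpha^{M+1}(0)$ in place of $x$, I get that
\[
\frac{1}{k}\sum_{n=M+1}^{M+k} f(\{n\alpha\})\longrightarrow \int f\,d\lambda\quad\text{as }k\to\infty,
\]
and crucially the rate of convergence does not depend on $M$. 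Hence for any $\delta>0$ there exists $K=K(\delta,f)$ such that for all $k\geq K$ and all $M\in\mathbb{N}$,
\[
\frac{1}{k}\sum_{n=M+1}^{M+k} f(\{n\alpha\})\leq \int f\,d\lambda+\delta\leq \lambda(A)+\epsilon+\delta.
\]

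\medskip

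\noindent\textbf{Step 3: Conclusion.} Using $\mathbbm{1}_A\leq f$,
\[
\frac{1}{k}\bigl(\#_{M+k}W-\#_M W\bigr)=\frac{1}{k}\sum_{n=M+1}^{M+k}\mathbbm{1}_A(\{n\alpha\})\leq \frac{1}{k}\sum_{n=M+1}^{M+k} f(\{n\alpha\})\leq \lambda(A)+\epsilon+\delta
\]
whenever $k\geq K$, uniformly in $M$. Taking the limsup as $k,M\to\infty$ gives $d_B(W)\leq \lambda(A)+\epsilon+\delta$, and letting $\delta,\epsilon\to 0$ yields $d_B(W)\leq \lambda(A)$.

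\medskip

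\noindent\textbf{Main obstacle.} The only nontrivial point is the uniform-in-$M$ convergence of the Birkhoff averages: one must pass from the usual statement of unique ergodicity (uniform in the starting point $x$) to uniformity in the window position $M$. This is immediate once one identifies $\{n\alpha\}$ with $R_\alpha^n(0)$ and shifts the summation index, so the substantive input is really just unique ergodicity together with the upper-semicontinuous approximation of $\mathbbm{1}_A$, which needs the compactness hypothesis on $A$. The cautionary example $A=\overline{\{n\alpha\}_{n\geq 1}}=[0,1]$ noted before the lemma shows that compactness alone is not enough to replace $\lambda(A)$ by something smaller, and that dropping compactness would be fatal.
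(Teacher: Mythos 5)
Your proof is correct and follows essentially the same route as the paper: sandwich $\mathbbm{1}_A$ from above by a continuous function whose integral is close to $\lambda(A)$, then invoke unique ergodicity of the irrational rotation to get Birkhoff averages that converge uniformly in the starting point, which yields the Banach density bound after shifting the window. The only cosmetic difference is that you build the continuous majorant by a single Urysohn function on an open neighbourhood of $A$, whereas the paper first covers $A$ by finitely many intervals and approximates each interval's indicator by a bump function before summing; both devices serve the same purpose.
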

	\begin{proof}
		For any $\epsilon>0,$ we can cover $A$ with intervals $A\subset \bigcup_{i\in\mathcal{I}}I_i$ such that $\mathcal{I}$ is a finite set and \[\sum_{i\in\mathcal{I}} \lambda(I_i)\leq \lambda(A)+\epsilon.\] Then we can approximate each $\mathbbm{1}_{I_i}$ with a continuous function $f_i:[0,1]\to [0,1]$  such that $f_i(x)=1$ for $x\in I_i$ and $f_i(x)=0$ for $x\notin (1+\epsilon)I_i,$ where $(1+\epsilon)I_i$ is the interval with the same centre as $I_i$ but its length is equal to $(1+\epsilon)$ times that of $I_i.$ Then because of the unique ergodicity we see that for each $i\in\mathcal{I}$ and $x\in [0,1),$
		\[
		\lim_{N\to\infty}\frac{1}{N}\sum_{i=0}^{N-1}f_i(R^i_\alpha(x))=\int f_i d\lambda.
		\]
		Furthermore the above limit holds uniformly across $x\in [0,1).$ Therefore for each $i\in\mathcal{I}$ there is a number $N_i$ which does not depend on $x$ such that for each $N\geq N_i$ and $x\in [0,1),$
		\[
		\frac{1}{N}\sum_{i=0}^{N-1}f_i(R^i_\alpha(x))\leq (1+\epsilon)\int f_id\lambda\leq (1+\epsilon)^2\lambda(I_i).
		\]
		Now let $N_\epsilon=\max_{i\in\mathcal{I}} N_i$ (this is where we use the finiteness of $\mathcal{I}$) and we see that for any integers $a,M$ such that $M\geq N_\epsilon$ we see that
		\[
		W\cap [a+1,a+M]\subset \{k\in [a+1,a+M]: \{k\alpha\}\in A   \}\subset \bigcup_{i\in\mathcal{I}} \{k\in [a+1,a+M]: \{k\alpha\}\in I_i   \}.
		\]
		Since $M\geq N_\epsilon$ for each $i\in\mathcal{I}$ we see that
		\[
		\#\{k\in [a+1,a+M]: \{k\alpha\}\in I_i   \}\leq \sum_{k=0}^{M-1} f_i(R^i_\alpha(R^a_\alpha(0)))\leq (1+\epsilon)^2\lambda(I_i)M.
		\]
		This implies that
		\[
		\#W\cap [a+1,a+M]\leq \sum_{i\in\mathcal{I}}(1+\epsilon)^2\lambda(I_i)M\leq (1+\epsilon)^2(\lambda(A)+\epsilon)M. 
		\]
		Since $\epsilon>0$ and $M>N_{\epsilon}$ can be chosen arbitrarily we see that the upper Banach density of $W$ is at most $\lambda(A).$
		
	\end{proof}
	
	It is natural to consider what happens when $A$ is small in dimension. For this purpose we need to consider error terms in ergodic limits. We will be most interested in the cases when $\alpha=\log p/\log q$ and $\alpha\notin\mathbb{Q}.$ It was proved in \cite{B15} that there are numbers $C(\alpha),c(\alpha)>0$ such that for all integers $n,m\geq 1$
	\[
	\tag{1}\left| \alpha-\frac{m}{n}\right|\geq c(\alpha)\frac{1}{n^{C(\alpha)}}.
	\]
	The best known example in this kind is when $\alpha=\log 2/\log 3$ and in this case, see \cite[ proposition and formula (6)(7) on page 160]{R85} the above inequality can be written as
	\[
	\left| \alpha-\frac{m}{n}\right|\geq 0.00000000000001\frac{1}{n^{14.3}}.
	\]
	Since for any two different integers $i_1,i_2$ we have
	\[
	|\{i_1\alpha\}-\{i_2\alpha\}|=|i_1\alpha-i_2\alpha-M_1+M_2|,
	\]
	where $M_1=\lfloor i_1\alpha \rfloor,M_2=\lfloor i_2\alpha \rfloor$. We see that
	\[
	|i_1\alpha-i_2\alpha-M_1+M_2|=|i_1-i_2|\left|\alpha-\frac{M_1-M_2}{i_1-i_2}\right|.
	\]
	As we can assume that $i_1>i_2$, the inequality (1) implies that
	\[
	|\{i_1\alpha\}-\{i_2\alpha\}|=|i_1-i_2|\left|\alpha-\frac{M_1-M_2}{i_1-i_2}\right|\geq c(\alpha) \frac{1}{i_1^{C(\alpha)-1}}.\tag{GAP}
	\]
	This is the key point of the inequality (1) that we shall use.
	\begin{lma}\label{LM2}
		Let $A\subset [0,1]$ be a set with $\ubox A=s<1$, then for any irrational number of form $\alpha=\log p/\log q$ with two integers $p,q>0$, we have the following inequality holds for all $\epsilon\in (0,1-s),$
		\[
		\sum_{n=0}^{N-1}\mathbbm{1}_{A}(R^n_\alpha(0))=O_{\alpha,\epsilon}(N^{C(\alpha)(s+\epsilon)}),
		\]
		where $C(\alpha)>0$ is a constant depends only on $\alpha$.
	\end{lma}
	\begin{rem}
		This lemma applies better in the case when $s$ is small. For example if $\alpha=\log 2/\log 3$ then when $s<1/14$ we have the following polynomial bound,
		\[
		\sum_{n=0}^{N-1}\mathbbm{1}_{A}(R^n_\alpha(0))=O(N^{0.95}).
		\]
	\end{rem}
	\begin{proof}
		Let $N$ be a large integer and we consider the following sequence
		\[
		S_N(\alpha)=\{\{i\alpha\}\}_{i\in [0,N]}.
		\]
		Then it is clear that elements in $S_N(\alpha)$ never coincide because $\alpha$ is an irrational number. Then by inequality $(\mathrm{GAP})$ we see that there exist positive numbers $c(\alpha), C(\alpha)>0$ such that for any $x,y\in S_N(\alpha)$ with $x\neq y,$
		\[
		|x-y|\geq c(\alpha) N^{-C(\alpha)+1}.
		\]
		Now we choose $r_N=N^{-C(\alpha)+1}$ and cover $A$ with $k_N=N(A,r_N)$ many disjoint $r_N$-intervals. We denote again the union of those $r_N$-intervals as $A^{r_N}$. Then we see that 
		\[
		\sum_{n=0}^{N-1}\mathbbm{1}_{A^{r_N}}(R^n_\alpha(0))=O_{\alpha}(k_N).
		\]
		This is because each $r_N$-interval we use to cover $A$ contains at most $O_\alpha(1)$ many points in $S_N(\alpha).$ Then because of the dimension requirement of $A$ we see that for any $\epsilon>0$,
		\[
		k_N=O_{\epsilon}(r_N^{-s-\epsilon}).
		\]
		Therefore we see that for a constant $C'(\alpha)$ we have
		\[
		\sum_{n=0}^{N-1}\mathbbm{1}_{A^{r_N}}(R^n_\alpha(0))=O_{\alpha,\epsilon}(N^{C'(\alpha)(s+\epsilon)}).
		\]
		This proves the result.
	\end{proof}
	\section{Small sets, dipole configurations}\label{SMALL}
	In this section we study $A_2\cap (uA_3+v)$ when $\Haus A_2+\Haus A_3<1/2$. Furstenberg studied this intersection in \cite{Fu2}, in particular, he showed that $\Haus A_2\cap (uA_3+v)=0$ for all $u\neq 0.$ In this section, we provide a more straightforward argument and a stronger result.
	\begin{proof}[Proof of Theorem \ref{THMHALF}]
		We consider the product set $K=A_3\times A_2.$ Then $l$ is, up to rescaling, the same as $l_K=l'\cap K$ with $l'=\{y=ux+v\}.$ For convenience we require that $u>1$ but we note that the cases for other $u\neq 0$ are similar. Now we want to show that $|l_K-l_K|$ is super sparse near $0$. We denote $W_K=W(|l_K-l_K|)$ and we want to show that $W_K$ has zero upper Banach density. Now for each $k\in W_K$ we can find $x_k,y_k\in l_K$ such that
		\[
		|y_k- x_k|\in [2^{-k-1},2^{-k}].
		\]
		Without loss of generality we shall assume that the vector $y_k-x_k$ has positive $Y$-component. Now let $\alpha=\log 2/\log 3$ and we can construct the map $T=\mathbb{R}^2\times [0,1]\to \mathbb{R}^2\times [0,1],$
		\[
		T((t_1,t_2),t)=
		\begin{cases*}
		((t_1,2t_2),R_\alpha(t)), \text{ if $t+\alpha\leq 1$}\\
		((3t_1,2t_2),R_\alpha(t)), \text{ if $t+\alpha>1$} 
		\end{cases*}
		\]
		Now let $x,y\in\mathbb{R}^2$ be two different points such that the line segment $xy$ is not parallel to the coordinate axis. Then we can find the following sequence of pairs of points in $\mathbb{R}^2$,
		\[
		((x_n,t_n)=T^n(x,0),(y_n,t_n)=T^n(y,0))_{n\geq 0}.
		\]
		Now construct the following sequences,
		\[
		(\theta_{1,n},\theta_{2,n})=\frac{y_n-x_n}{|x_n-y_n|}\in S^1, \theta_n(x,y)=\log \left(\frac{\theta_{2,n}}{\theta_{1,n}}\right).
		\]
		Then we see that $\theta_0(x,y)=\log \left(\frac{\theta_{2,0}}{\theta_{1,0}}\right)$ and in general for each integer $n\geq 1,$
		\[
		\theta_n(x,y)=\log 3\{n\log 2/\log 3\}+\theta_0.
		\]
		Now we apply the above map $T$ for $k$ times with initial pair $x_k,y_k$ and end up with the pair
		\[
		((x,t_k)=T^{k}(x_k,0),(y,t_k)=T^{k}(y_k,0)).
		\]
		Then we see that $\theta_k(x_k,y_k)=\log 3\{k\log 2/\log 3\}+\theta_0(x_k,y_k)\in (\log u,\log u+\log 3).$ We want to estimate the distance $|x-y|$, the $Y$-component of $y_k-x_k$ lies in $[2^{-k-1}/\sqrt{2},2^{-k}].$ Therefore we see that the $Y$-component of $y-x$ lies in
		$
		[0.5/\sqrt{2},1]
		$
		thus we see that 
		\[
		|y-x|\in [1/6,1.5].
		\]
		We still have to perform the $\mod 1$ operation on each coordinate component of $x$ and $y$. Denote the following doubled set of $K$
		\[
		\tilde{K}=K\cup (K+(0,1))\cup (K+(1,0))\cup (K+(1,1)),
		\]
		then because $|y-x|\in [1/6,1.5]$, we can find $\tilde{y}, \tilde{x}\in \tilde{K}$ such that
		\[
		\tilde{y}-\tilde{x}=y-x.
		\]
		For each $k\in W_K$ we have seen that there is a pair of points $x,y\in\tilde{K}$ with $|x-y|\in [1/6,1.5]$ such that the direction vector $y-x$ has slope
		\[
		u3^{\{k\log 2/\log 3\}}.
		\]
		We denote the map $e:[0,1]\to S^1$ such that $e(t)$ is the direction vector in $S^1\subset\mathbb{R}^2$ with slope $u3^t.$ It is easy to see that this map is smooth; therefore it is bi-Lipschitz. Then we see that 
		\[
		e\left(\{k\log 2/\log 3\}_{k\in W_K}\right)\subset DD(K).
		\]
		However the dipole direction set $DD(K)$ has upper box dimension at most $2s<1$ and therefore its Lebesgue measure is $0$. By Lemma \ref{Target} $W_K$ must have upper Banach density $0$. For the second conclusion, let $N$ be a large integer and $a$ be an arbitrarily chosen integer. We notice that $\{k\log 2/\log 3\}_{k\in [a,a+N]\cap W_K}$ consists $r_N$-separated points for $r_N=N^{-13.3}$, see Lemma \ref{LM2}. Let $\epsilon>0$ be a small number then for all large enough $N$ we see that
		\[
		\#W_K\cap [a,a+N]\leq \left(\frac{1}{r_N} \right)^{2s+\epsilon}.
		\]
		If we choose $\epsilon$ to be small enough we see that
		\[
		\#W_K\cap [a,a+N]=O(N^{27s}).
		\]
	\end{proof}
	\section{Sinai's factor theorem: Casino with clocks}\label{SINAI}
	In this section we introduce Sinai's factor theorem and prove Theorem \ref{Cor}. The main technicalities here are similar to that in \cite[Section 6]{Wu} and our Theorem \ref{Cor} can be seen as a generalization of \cite[Theorem 6.1]{Wu}. 
	
	To have a intuitive idea in mind, consider a sequence of i.i.d random variables $\{X_n\}_{n\geq 1}$ with values in $\{0,1\}$ For any irrational number $\alpha$ we consider the sequence
	$
	\{X_nR^n_\alpha(0)\}_{n\geq 1}.
	$
	Intuitively, imagine a casino with a clock (which is unrealistic) with only one finger rotating with irrational angular speed ( $+\alpha\mod 1$ system). Whenever a gambler throws a coin with head up then he will check the clock. Then a sample path of the above random sequence would be a series of time a gambler observed. The results in this section can be intuitively stated as follows. For each gambler, almost surely, the time series he observed equidistributes in $[0,1]$, that is, the time series he observed does not depend on whether he is winning or losing. We shall discuss various different aspects towards the above intuition.
	\subsection{Bernoulli system}
	Let $\Lambda$ be a finite set of symbols and let $\Omega=\Lambda^{\mathbb{N}}$ be the space of one sided infinite sequences over $\Lambda.$ We define $S$ to be the shift operator, namely, for $\omega=\omega_1\omega_2\dots\in\Omega,$
	\[
	S(\omega)=\omega_2\omega_3\dots.
	\]
	Then we take a $\sigma$-algebra on $\Omega$ generated by cylinder subsets. A cylinder subset $Z\subset\Omega$ is such that
	$
	Z=\prod_{i\in\mathbb{N}}Z_i
	$
	and $Z_i=\Lambda$ for all but finitely many integers $i\in\mathbb{N}.$ We construct a probability measure $\mu$ on $\Omega$ by giving a probability measure $\mu_{\Lambda}=\{p_\lambda\}_{\lambda\in \Lambda}$ on $\Lambda$ and set $\mu=\mu^{\mathbb{N}}_{\Lambda}.$ We require here that $p_\lambda\neq 0$ for all $\lambda\in \Lambda$. Then this system is weak-mixing and has entropy $h(S,\mu)=\sum_{\lambda\in\Lambda} -p_\lambda\log p_\lambda.$ We call this system a Bernoulli system. We can also introduce a metric topology on $\Omega$ by defining $d(\omega,\omega')=\#\Lambda^{-\min\{i\in\mathbb{N}: \omega_i\neq\omega'_i\}}.$ This turns $\Omega$ into a compact and totally disconnected space. For $\omega\in\Omega$ and $r\in (0,1)$, we use $B(\omega,r)$ to denote the $r$-ball around $\omega$ with radius $r$ with respect to the metric $d$ constructed above.
	\subsection{Sinai's factor theorem}
	\begin{thm}[Sinai's factor theorem]\label{factor}
		Let $(X,S,\mu)$ be an ergodic dynamical system. Then any Bernoulli system $(\Omega,S_B,\nu)$ with $h(S_B,\nu)\leq h(S,\mu)$ is a factor of $(X,S,\mu).$ 
	\end{thm}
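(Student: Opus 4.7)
The plan is to reformulate the construction of a factor map into the construction of a finite measurable partition of $X$ whose $S$-iterates form an i.i.d.\ process with the prescribed distribution $\mu_\Lambda=\{p_\lambda\}_{\lambda\in\Lambda}$. Indeed, given any finite partition $\mathcal{P}=\{P_\lambda\}_{\lambda\in\Lambda}$ of $X$, the coding map $\phi_\mathcal{P}(x)_n=\lambda\iff S^{n-1}(x)\in P_\lambda$ automatically satisfies $\phi_\mathcal{P}\circ S=S_B\circ \phi_\mathcal{P}$, and the condition $(\phi_\mathcal{P})_*\mu=\nu$ is equivalent to $\mu(P_\lambda)=p_\lambda$ for each $\lambda$ together with mutual $\mu$-independence of the $\sigma$-algebras $S^{-n}\mathcal{P}$, $n\geq 0$. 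So the task reduces to building such an ``independent'' partition with the correct marginals.

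Next I would invoke the entropy criterion: independence of the iterates of $\mathcal{P}$ is equivalent to the equality $h(S,\mathcal{P})=H(\mu,\mathcal{P})$, so it suffices to find $\mathcal{P}$ with the prescribed marginal weights whose dynamical entropy attains the static value $H(p)=h(S_B,\nu)$. The hypothesis $h(S_B,\nu)\leq h(S,\mu)$ is precisely the ``entropy budget'' that makes this possible: Kolmogorov--Sinai provides a generating partition $\mathcal{Q}$ for $(X,S,\mu)$ with $h(S,\mathcal{Q})=h(S,\mu)\geq H(p)$, and we look for $\mathcal{P}$ measurable with respect to $\bigvee_{n\geq 0}S^{-n}\mathcal{Q}$.

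The actual construction is iterative and relies crucially on the Rokhlin tower lemma, which is available since $S$ is ergodic and hence aperiodic on the non-atomic part of $X$. Start with an arbitrary partition $\mathcal{P}_0$ with the correct marginals. At stage $n$, use a tall Rokhlin tower $B, S^{-1}B,\dots,S^{-N_n}B$ covering all but a tiny residual set to perform a small measure-preserving rearrangement of the atoms, producing $\mathcal{P}_{n+1}$ which is $\varepsilon_n$-close to $\mathcal{P}_n$ in the partition metric $d(\mathcal{P},\mathcal{P}')=\sum_\lambda \mu(P_\lambda\triangle P'_\lambda)$, still has the right marginals, but whose joint distribution of the first $n$ coordinates of $\phi_{\mathcal{P}_{n+1}}$ is closer to the product measure $\nu$. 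Choosing $\varepsilon_n$ summable makes the sequence Cauchy and yields a limit partition $\mathcal{P}_\infty$.

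The hard part will be verifying that the limit $\mathcal{P}_\infty$ really produces an independent process rather than one which is merely ``nearly independent in finite windows''. This is where the more delicate part of Sinai's (or equivalently Ornstein's $\bar d$-metric) machinery enters: one must control the defect $H(\mu,\mathcal{P}_n)-h(S,\mathcal{P}_n)$ uniformly as $n\to\infty$, and exploit the upper semicontinuity of dynamical entropy in the partition metric so that the independence property, expressed as the vanishing of this defect combined with the convergence of all finite-dimensional joint distributions to the Bernoulli product, passes to the limit. Once this is checked, $\phi_{\mathcal{P}_\infty}$ is the desired factor map and the proof is complete.
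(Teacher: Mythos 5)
The paper does not supply a proof of this theorem: it is Sinai's classical weak isomorphism (factor) theorem, stated here only so that it can be invoked as a black box in Section~\ref{SINAI} (in particular in Theorem~\ref{KK} and, through Wu's Theorem~\ref{Wu}, in Theorem~\ref{Cor}). So there is no in-paper argument to compare yours against; I can only assess the sketch on its own merits.

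Your reduction is correct and is the standard starting point: a factor map to the Bernoulli system $(\Lambda^{\mathbb N},S_B,\nu)$ is the same thing as a finite partition $\mathcal P=\{P_\lambda\}_{\lambda\in\Lambda}$ of $X$ with $\mu(P_\lambda)=p_\lambda$ whose $S$-iterates $\{S^{-n}\mathcal P\}_{n\ge 0}$ are mutually $\mu$-independent, and independence of the iterates is indeed equivalent to $h(S,\mathcal P)=H(\mu,\mathcal P)$. The Rokhlin-tower, iterative-rearrangement scheme you describe is also the route Sinai's original proof takes, and you are right that convergence should be arranged in the Rohlin partition metric with summable step sizes. One minor inaccuracy: for a fixed number of atoms, $\mathcal P\mapsto h(S,\mathcal P)$ and $\mathcal P\mapsto H(\mu,\mathcal P)$ are both \emph{continuous} in the partition metric, not merely upper semicontinuous, so the passage to the limit is a little cleaner than you suggest; what genuinely has to be established is that the entropy defect $H(\mu,\mathcal P_n)-h(S,\mathcal P_n)$ tends to zero.

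The gap is exactly where you flag it, and it is not a small one. Your description of what happens at stage $n$ --- ``use a tall Rokhlin tower to perform a small rearrangement so that the finite-dimensional marginals of the coded process get closer to product'' --- names the tools but omits the mechanism. The core of Sinai's argument is a quantitative copying/painting step: on a tower of height $N_n$ one writes down, independently on each rung-column, a block of names drawn (in an approximate sense guaranteed by Shannon--McMillan--Breiman for the reference generator $\mathcal Q$) from the $(p_\lambda)$-Bernoulli distribution, and shows that this strictly decreases the defect by an amount that can be made definite, while perturbing $\mathcal P_n$ by only $O(\varepsilon_n)$ and keeping the marginals exactly $p_\lambda$. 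Proving that this can be done, and that the gains persist under later perturbations (so the defect really goes to $0$ and not just below $\varepsilon_n$ at stage $n$), is the substance of the theorem; your sketch postpones this to ``Sinai's or Ornstein's machinery'' without indicating how it would be run. As written, then, the proposal records the architecture of the known proof rather than furnishing one.
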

	
	Let $Ber=(\Omega,S,\mu)$ be a Bernoulli system on $\Omega=\Lambda^{\mathbb{N}}$. Let $\alpha\in (0,1)$ be an irrational number. Heuristically, the dynamical system $T$ looks like a stochastic process with a sequence of i.i.d random variables. For any $B\subset\Omega$ with $\mu(B)>0$ and $\omega\in \Omega$ the following set
	\[
	K(\omega,B)=\{k\in\mathbb{N}: S^k(\omega)\in B\}
	\]
	can be realized as randomly constructed by choosing each $k\in\mathbb{N}$ independently with probability $\mu(B)$. Then for any subset $K'\subset\mathbb{N}$ the chance that $K(\omega,B)\cap K'=\emptyset$ is $(1-\mu(B))^{\# K'}$ and it is small when $\# K'$ is large unless $\mu(B)=0$ which we assumed not to be the case. 
	\begin{defn}
		Let $(X,S,\mu)$ be a dynamical system and let $B\subset X$ be a subset. Then we can construct the following sequence
		\[
		K(x,B)=\{k\in\mathbb{N}: S^k(x)\in B\},
		\]
		and the following set for $\alpha\in [0,1),$
		\[
		A_{K(x,B)}(\alpha)=\overline{\{R^k_\alpha(0)\}}_{k\in K(x,B)}.
		\]
	\end{defn}
	
	\begin{lma}\label{BERANDROT2}
		Consider the Bernoulli system $(\Omega,S,\mu)$. Let $\{B_i\}_{i\in\mathcal{I}}$ be a finite pairwise disjoint family of measurable subsets of $\Omega.$ Suppose that $\sum_{i\in\mathcal{I}}\mu(B_i)\geq 1-\delta$ for a $\delta\in (0,1).$ Then there exists a set $\Omega'\subset\Omega$ with full $\mu$-measure such that for each $\omega\in\Omega$ and any integer sequence $K$ of lower natural density $\rho$ larger than $\delta$, there exists an $i=i(\omega,K)\in\mathcal{I}$ such that
		\[
		A_{K(\omega,B_i)\cap K}
		\]
		has  Lebesgue measure at least $\rho-\delta.$
	\end{lma}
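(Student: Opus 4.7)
The plan is to replace the random-sampling input of Lemma \ref{BERANDROT} by a Wiener-Wintner-type joint equidistribution of the Bernoulli return times against the deterministic rotation orbit of $0$. First I would construct a full $\mu$-measure set $\Omega'\subseteq\Omega$ on which, for every $i\in\mathcal{I}$ and every interval $J\subset[0,1]$ with $\lambda(\partial J)=0$,
\[
\frac{1}{N}\#\{k\in[1,N]:\, \{k\alpha\}\in J,\ S^k\omega\in B_i\}\ \longrightarrow\ \mu(B_i)\lambda(J).
\]
Since $(\Omega,S,\mu)$ is weak mixing, its $L^2$ point spectrum is $\{1\}$; the Wiener-Wintner theorem applied to $f=\mathbbm{1}_{B_i}$ then supplies, for each integer $n\neq 0$ and each $i$, a full $\mu$-measure set on which $\frac{1}{N}\sum_{k=1}^{N} \mathbbm{1}_{B_i}(S^k\omega)\, e^{2\pi i n k\alpha}\to 0$ (here $i$ in the exponent is $\sqrt{-1}$), while Birkhoff handles $n=0$ with limit $\mu(B_i)$. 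Intersecting the countably many resulting full-measure sets, extending the convergence to trigonometric polynomials by linearity, to continuous test functions by uniform (Stone-Weierstrass) approximation, and finally to indicators of Jordan-measurable intervals via Theorem \ref{PORT} produces the desired $\Omega'$.

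Now I fix $\omega\in\Omega'$ and $K\subset\mathbb{N}$ with $\underline{d}(K)=\rho>\delta$. Birkhoff at $\omega$ gives that $K(\omega,\bigcup_i B_i)$ has natural density $\sum_i\mu(B_i)\geq 1-\delta$, whence $\underline{d}(K\cap K(\omega,\bigcup_i B_i))\geq\rho-\delta$. Suppose, towards a contradiction, that $\lambda(A_i)<\rho-\delta$ for every $i$, where $A_i:=A_{K\cap K(\omega,B_i)}$. Choose $\eta$ with $\max_i\lambda(A_i)<\eta<\rho-\delta$ and cover each compact $A_i$ by a finite family $\mathcal{J}_i$ of open intervals with $\lambda$-null boundaries satisfying $\sum_{J\in\mathcal{J}_i}\lambda(J)<\eta$. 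Since $\{k\alpha\}\in A_i$ for every $k\in K\cap K(\omega,B_i)$, we have
\[
K\cap K(\omega,B_i)\ \subset\ \bigcup_{J\in\mathcal{J}_i}\bigl\{\,l\in\mathbb{N}:\,\{l\alpha\}\in J,\ S^l\omega\in B_i\,\bigr\},
\]
and the equidistribution from the previous paragraph bounds the upper natural density of the right-hand side by $\mu(B_i)\sum_{J\in\mathcal{J}_i}\lambda(J)\leq\mu(B_i)\eta$. Summing over the disjoint family $\{K\cap K(\omega,B_i)\}_{i\in\mathcal{I}}$ and using subadditivity of $\overline{d}$ on disjoint unions yields
\[
\overline{d}\bigl(K\cap K(\omega,\textstyle\bigcup_i B_i)\bigr)\ \leq\ \eta\sum_i\mu(B_i)\ \leq\ \eta\ <\ \rho-\delta,
\]
which contradicts the lower bound and forces some $i$ with $\lambda(A_i)\geq\rho-\delta$.

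The main obstacle is the first step. A naive application of Birkhoff to the product system $(\Omega\times[0,1],\,S\times R_\alpha,\,\mu\times\lambda)$ only delivers joint equidistribution for $\mu\times\lambda$-almost every initial pair, which does not a priori include the deterministic second coordinate $x=0$ that Lemma \ref{BERANDROT2} requires. Routing the argument through Wiener-Wintner for the Bernoulli factor together with the unique ergodicity of $R_\alpha$ (encoded in the uniform trigonometric approximation and Portmanteau) is what upgrades the ``$\lambda$-a.e.\ $x$'' statement to one valid at the specific orbit $\{k\alpha\}_{k\geq 1}$, and this is the only substantive new ingredient beyond the argument of Lemma \ref{BERANDROT}.
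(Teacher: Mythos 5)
Your proof is correct but travels a genuinely different route from the paper's in two places. For the joint equidistribution
\[
\frac{1}{N}\#\{k\le N:\{k\alpha\}\in J,\ S^k\omega\in B_i\}\longrightarrow\mu(B_i)\lambda(J)
\]
for $\mu$-a.e.\ $\omega$ and all intervals $J$, you invoke the Wiener--Wintner theorem, whereas the paper argues through the ergodicity of the product $(\Omega\times[0,1],S\times R_\alpha,\mu\times\lambda)$ and cites \cite[Lemma 6.5]{Wu}. The worry you raise --- that Birkhoff on the product only gives convergence for $\mu\times\lambda$-a.e.\ starting pair $(\omega,x)$, not at $x=0$ --- is legitimate, but it admits a lighter fix than Wiener--Wintner: if the limit holds at a single $x_0$ for all intervals $J$, then since $\{x_0+k\alpha\}\in J$ if and only if $\{k\alpha\}\in J-x_0\pmod 1$, the same limit holds at $x=0$ for all $J$ by relabelling the test intervals; this rotational homogeneity is what Wu's lemma makes precise, and it is all the paper needs. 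Your Wiener--Wintner argument is nonetheless valid (the relevant frequencies are the countably many $n\alpha$, each irrational mod $1$, so the weak-mixing case applies, and extending along trigonometric polynomials, continuous functions and then Jordan-measurable intervals via Theorem \ref{PORT} is sound), it is simply a heavier hammer. Second, your combinatorial endgame differs: you argue by contradiction, covering every $A_i$ with intervals of total length below $\eta<\rho-\delta$, summing upper densities over all $i$ and contradicting $\underline{d}(K\cap K(\omega,\bigcup_i B_i))\ge\rho-\delta$. The paper instead pigeonholes first to find a single index $i$ with $\overline{d}(K\cap K(\omega,B_i))\ge(\rho-\delta)\mu(B_i)$, then shows for that one $i$ that every finite union of intervals with disjoint interiors and total length exceeding $1-(\rho-\delta)$ must meet $A_{K\cap K(\omega,B_i)}$, so the open complement has measure at most $1-(\rho-\delta)$. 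Both endgames are correct; yours avoids the preliminary pigeonhole at the cost of a proof by contradiction, and is arguably the cleaner of the two.
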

	\begin{proof}
		For each $i\in\mathcal{I}$, $K(\omega,B_i)$ can be essentially viewed as a random sequence of integers obtained by deciding to choose each integer independently with probability $\mu(B_i).$ It is helpful to have this intuition in mind for what follows. We see that for almost all $\omega\in\Omega$, by the ergodicity of Bernoulli systems,
		\[
		d(K(\omega,B_i))=\mu(B_i),
		\]
		and the sequence of real numbers $\{R^k_\alpha(0)\}_{k\in K(\omega,B_i)}$ equidistributes in $[0,1]$ (we re-enumerate $K(\omega,B_i)$ with $\mathbb{N}$). This can be seen by considering the dynamical system $(\Omega\times [0,1],S\times R_\alpha, \mu\times \lambda)$ ($\lambda$ is the Lebesgue measure) which is ergodic because it is the product of a weakly mixing and a uniquely ergodic system, see also \cite[Lemma 6.5]{Wu}. Since $\mathcal{I}$ is a finite family, we see that for almost all $\omega\in \Omega$, for each $i\in\mathcal{I}$ the above results hold. We denote this full measure set as $\Omega'.$ We see that for each $\omega\in\Omega'$
		\[
		d(\cup_{i\in\mathcal{I}} K(\omega,B_i))\geq 1-\delta.
		\] 
		Now let $K$ be an arbitrarily chosen sequence with lower natural density $\rho>\delta,$ then we see that $K\cap(\cup_{i\in\mathcal{I}} K(\omega,B_i))$ has lower natural density at least $\rho-\delta>0.$ We denote for each $i\in\mathcal{I}$, $K_i=K\cap K(\omega,B_i)$ and $\rho_i=\overline{d}(K_i).$ Then we see that 
		\[
		\sum_{i\in\mathcal{I}}\rho_i\geq \rho-\delta.
		\]
		Now if $\rho_i<(\rho-\delta)\mu(B_i)$ for all $i\in\mathcal{I}$ we see that
		\[
		(\rho-\delta)\sum_{i\in\mathcal{I}}\mu(B_i)>\sum_{i\in\mathcal{I}}\rho_i\geq \rho-\delta.
		\]
		This implies that $\sum_{i\in\mathcal{I}}\mu(B_i)>1$ and it is impossible. So we see that there exists an $i\in\mathcal{I}$ such that $\rho_i\geq (\rho-\delta)\mu(B_i).$ Now we denote $\epsilon=\rho-\delta.$ For this $i$ we see that $K(\omega,B_i)\setminus K_i$ has lower natural density at most \[\mu(B_i)-\rho_i\leq (1-\epsilon)\mu(B_i).\] Now by the equidistribution property we see that for any interval $I\subset [0,1]$,
		\[
		K''=\{k\in K(\omega,B_i):R^k_{\alpha}(0)\in I   \}
		\]
		has natural density $\mu(B_i)|I|$ and therefore if $|I|\mu(B_i)> (1-\epsilon)\mu(B_i)$ then $K''$ has natural density strictly larger than $(1-\epsilon)\mu(B_i).$ Therefore $K_i\cap K''$ cannot be empty and thus we have $I\cap A_{K(\omega,B_i)\cap K}\neq\emptyset.$ This argument works for finite unions of intervals as well. For any finite collection of intervals with disjoint interiors $I_j,j\in\mathcal{J}$ with total length $\sum_{j\in\mathcal{J}} |I_j|>1-\epsilon$ we see that
		\[
		\left(\bigcup_{j\in\mathcal{J}} I_j\right)\cap A_{K(\omega,B_i)\cap K}\neq\emptyset.
		\]
		Then we see that $A^c_{K(\omega,B_i)\cap K}$ is open and has Lebesgue measure at most $1-\epsilon.$ This is because for any open set $O\subset [0,1]$, there exist a countable family $L_m,m\geq 1$ of intervals with disjoint interior such that $\sum_{m}|L_m|=\lambda(O),$ where $\lambda$ is the Lebesgue measure, see \cite[Theorem 1.3]{SS}. Then for any $\eta>0$ we can find a finite collection of those intervals with total length at least $\lambda(O)-\eta.$ We can apply this argument for $A^c_{K(\omega,B_i)\cap K}$ and for arbitrary small $\eta>0.$ As a result we see that $\lambda(A_{K\cap K(\omega,B_i)})\geq \epsilon$ as required.
	\end{proof}
	\begin{thm}\label{KK}
		Let $(X,S,\mu)$ be an ergodic dynamical system with $h(S,\mu)>0.$ We can find a Bernoulli factor $Ber=(\Omega,S_B,\nu)$ of $(X,S,\mu)$ with $h(S_B,\nu)=h(T,\mu)>0.$ Denote $f:X\to\Omega$ to be the factorization map. For a $\delta>0,$ let $B_i,i\in\mathcal{I}$ be a finite disjoint collection of measurable subsets in $\Omega$ with $\sum_{i\in\mathcal{I}}\nu(B_i)\geq 1-\delta.$ Then for $\mu$ almost all $x\in X,$ for any integer sequence $K$ with lower natural density $\rho>\delta$ there exist an $i\in\mathcal{I}$ such that
		\[
		A_{K\cap H_i(x)}(\alpha)\text{ has Lebesgue measure at least $\rho-\delta$,}
		\]
		where $
		H_i(x)=\{k\in\mathbb{N}: S^k(x)\in f^{-1}(B_i)\}.
		$
	\end{thm}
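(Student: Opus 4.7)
The plan is to deduce Theorem \ref{KK} from Lemma \ref{BERANDROT2} by pushing the Bernoulli statement through the factor map $f$, mirroring the way Theorem \ref{Sinai} was obtained from Lemma \ref{BERANDROT}. First I would invoke Sinai's factor theorem (Theorem \ref{factor}) to produce the Bernoulli factor $(\Omega, S_B, \nu)$ of equal entropy, together with the factorization map $f: X \to \Omega$ satisfying $f_*\mu = \nu$ and $f \circ S = S_B \circ f$. The family $\{B_i\}_{i \in \mathcal{I}}$ is already assumed to live in $\Omega$, so we can apply Lemma \ref{BERANDROT2} in the Bernoulli system: there is a set $\Omega' \subset \Omega$ with $\nu(\Omega') = 1$ such that for every $\omega \in \Omega'$ and every integer sequence $K$ with lower natural density $\rho > \delta$, one can find $i = i(\omega, K) \in \mathcal{I}$ with
\[
\lambda\bigl(A_{K(\omega, B_i) \cap K}(\alpha)\bigr) \geq \rho - \delta,
\]
where $K(\omega, B_i) = \{k \in \mathbb{N} : S_B^k(\omega) \in B_i\}$.

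Next I would transfer this statement back to $X$. Set $X' = f^{-1}(\Omega')$; since $f_*\mu = \nu$ we have $\mu(X') = 1$. The compatibility $f \circ S = S_B \circ f$ gives, by induction, $f \circ S^k = S_B^k \circ f$ for every $k \in \mathbb{N}$, so for every $x \in X$
\[
H_i(x) = \{k \in \mathbb{N} : S^k(x) \in f^{-1}(B_i)\} = \{k \in \mathbb{N} : S_B^k(f(x)) \in B_i\} = K(f(x), B_i).
\]
Therefore, for every $x \in X'$ and every $K$ with lower density exceeding $\delta$, choosing $i = i(f(x), K)$ from the Bernoulli statement yields
\[
\lambda\bigl(A_{K \cap H_i(x)}(\alpha)\bigr) = \lambda\bigl(A_{K \cap K(f(x), B_i)}(\alpha)\bigr) \geq \rho - \delta,
\]
which is exactly the conclusion of the theorem.

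There is no real obstacle left once one has Lemma \ref{BERANDROT2} and Sinai's theorem; the only subtlety worth checking carefully is that the full-measure set $\Omega'$ produced by Lemma \ref{BERANDROT2} is uniform in $K$, i.e.\ does not depend on the specific sequence of density $>\delta$. Inspection of the proof of Lemma \ref{BERANDROT2} confirms this: the exceptional null set arises from requiring that for each $i \in \mathcal{I}$ (a finite collection), the density $d(K(\omega, B_i)) = \nu(B_i)$ holds and the orbit $\{R_\alpha^k(0)\}_{k \in K(\omega, B_i)}$ equidistributes; both statements hold on a single full-measure set independent of $K$. Pulling back this one null set under $f$ gives the required uniform $\mu$-null set in $X$, completing the argument.
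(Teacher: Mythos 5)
Your argument is correct and follows exactly the route the paper intends: it is the proof of Theorem~\ref{Sinai} with Lemma~\ref{BERANDROT2} substituted for Lemma~\ref{BERANDROT}, transferring the full-measure set $\Omega'$ back to $X$ via $X'=f^{-1}(\Omega')$ and using $f\circ S^k = S_B^k\circ f$ to identify $H_i(x)$ with $K(f(x),B_i)$. Your remark that $\Omega'$ is uniform in $K$ is the right thing to check, and it does hold by inspection of Lemma~\ref{BERANDROT2}; the only pedantic caveat is that if the semiconjugacy $f\circ S = S_B\circ f$ only holds $\mu$-a.e.\ one should further intersect $X'$ with that full-measure set, but this does not affect the conclusion.
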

	\begin{proof}
		We can find a Bernoulli factor $Ber=(\Omega,S_B,\nu)$ of $(X,S,\mu)$ with $h(S_B,\nu)=h(S,\mu)>0$. The a straightforward application of Theorem \ref{KK} gives us the result.
	\end{proof}

	When $Ber$ is a factor of $(X,S,\mu)$ with the same entropy, then intuitively all the complicities are carried by $Ber$ and therefore the fibres of $f$ should not be too complicated with respect to the map $S$. The following result expresses this intuition in a clear way. The following result is known as Rohlin's disintegration theorem, and we adopt the version in \cite{S12}.
	\begin{defn}
		Let $f:X\to Y$ be a measurable map between two measurable spaces and let $\mu$ be a measure on $X$ with projection $\mu_Y=f\mu$ on $Y$. We call a collection of measures $\{\mu_y\}_{y\in Y}$ a system of conditional measures if the following properties hold,
		\begin{itemize}
			\item[1]: For all $y\in Y$, $\mu_y$ is a measure supported on $f^{-1}(y)$ and for $\mu_Y$ almost all $y\in Y$, $\mu_y$ is a probability measure.
			\item[2]: We have the law of measure disintegration. For all Borel set $B\subset X$, we have
			\[
			\mu(B)=\int \mu_{y}(B)d\mu_Y(y).
			\]
		\end{itemize}
		If $X,Y$ are also metric spaces ($f$ need not to be continuous) we require further that the following holds for $\mu_Y$ almost all $y\in Y$.
		\begin{itemize}
			\item[3]: $\mu_y=\lim_{r\to 0} \mu_{f^{-1}(B(y,r))},$ where the limit is in the weak* sense and $\mu_{f^{-1}(B(y,r))}$ is the conditional measure of $\mu$ on $f^{-1}(B(y,r))$, namely, for any Borel set $B\subset X$ with positive $\mu$ measure,
			\[
			\mu_{f^{-1}(B(y,r))}(B)=\frac{\mu(B\cap f^{-1}(B(y,r)))}{\mu(B)}.
			\]
		\end{itemize}
	\end{defn}
	\begin{thm}
		Let $f:X\to Y$ be a measurable map between two metric spaces with corresponding Borel $\sigma$-algebra. Then there exists a system of conditional measures.
	\end{thm}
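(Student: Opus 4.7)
The plan is to construct the disintegration via the Radon--Nikodym theorem applied to the pushforward $\mu_Y = f_*\mu$, and then to exploit separability to promote an a.e.-defined set function on a countable generating algebra into a genuine Borel probability measure on a $\mu_Y$-conull subset of $Y$. This is the classical route to Rohlin's disintegration theorem, and the hardest aspect will be reconciling a countable number of a.e.-statements into pointwise ones that are genuinely measure-theoretic.

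First I would observe that, for each Borel $B \subset X$, the set function $A \mapsto \mu(B \cap f^{-1}(A))$ on $Y$ is absolutely continuous with respect to $\mu_Y$, so Radon--Nikodym yields a density $\phi_B : Y \to [0,1]$, defined $\mu_Y$-a.e., which is the candidate for $y \mapsto \mu_y(B)$. The obstacle is that the exceptional null set depends on $B$, and one cannot discard uncountably many of them simultaneously. I would circumvent this by fixing a countable algebra $\mathcal{A}_0$ of Borel subsets of $X$ which generates the Borel $\sigma$-algebra (available since $X$ is a separable metric space, e.g.\ finite unions of open balls of rational radius about a countable dense set). On a $\mu_Y$-conull set $Y_0 \subseteq Y$ one has, simultaneously for every $y \in Y_0$: $\phi_{\emptyset}(y)=0$, $\phi_X(y)=1$, finite additivity on disjoint pairs in $\mathcal{A}_0$, and the countable continuity $\phi_{A_n}(y) \downarrow 0$ along any fixed countable decreasing family in $\mathcal{A}_0$ with empty intersection. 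For each $y \in Y_0$, Carath\'eodory extension then produces a Borel probability measure $\mu_y$ agreeing with $\phi_{(\cdot)}(y)$ on $\mathcal{A}_0$, and property (2) extends from $\mathcal{A}_0$ to the full Borel $\sigma$-algebra by the monotone class theorem. For $y \notin Y_0$ one sets $\mu_y$ to any fixed reference measure.

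The support condition in property (1) would be established using separability of $Y$: for each element $U_n$ of a countable basis of open sets of $Y$, the identity $\int_{U_n^c} \mu_y(f^{-1}(U_n))\, d\mu_Y(y) = \mu(f^{-1}(U_n) \cap f^{-1}(U_n^c)) = 0$ forces $\mu_y$ to be concentrated on $f^{-1}(\overline{U_n})$ whenever $y \in U_n$, and intersecting over $n$ concentrates $\mu_y$ on $f^{-1}(y)$ for $\mu_Y$-a.e.\ $y$. The main obstacle is property (3), the weak-$*$ differentiation statement, since it requires a genuine pointwise limit rather than merely an a.e.\ identity. Here I would choose a countable family $\{g_k\} \subset C_b(X)$ that is determining for weak-$*$ convergence (again using separability), and apply a martingale/differentiation argument to the sequence of conditional expectations $y \mapsto \int g_k \, d\mu_{f^{-1}(B(y,r))}$, using a Vitali-type theorem on $Y$ to get convergence $\mu_Y$-a.e.\ along the refining family of balls. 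The countability of $\{g_k\}$ again permits throwing away a single null set. Matching the weak-$*$ limit so obtained with our constructed $\mu_y$ follows because both assign the same values to the integrals against $g_k$ on $Y_0$, which by density determines the measure. The technical subtlety I expect to be the most delicate is verifying the Vitali-type differentiation along $f^{-1}(B(y,r))$ in the absence of any regularity assumption on $f$; this is where some structural hypothesis on $Y$ (for example being a Radon or even Polish space) will need to be invoked.
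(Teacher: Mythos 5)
The paper does not actually give a proof of this statement: it cites it directly as Rohlin's disintegration theorem, referring to Simmons \cite{S12} for the exposition and proof. So there is no ``paper proof'' to compare against; what you have done is reconstruct the standard Radon--Nikodym-plus-Carath\'{e}odory route, which is essentially the argument in \cite{S12} and the classical sources. As an outline it is sound: Radon--Nikodym densities $\phi_B$ for each Borel $B$, a countable generating algebra $\mathcal{A}_0$ to collapse the uncountably many exceptional null sets, Carath\'{e}odory extension off $\mathcal{A}_0$, the monotone class theorem to upgrade to all Borel sets, a countable base of $Y$ to pin $\mu_y$ to $f^{-1}(y)$, and a countable weak-$*$-determining family in $C_b(X)$ to reduce property (3) to countably many scalar differentiation statements. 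You also correctly flag that the theorem as literally stated (arbitrary metric spaces) is a little loose: the Carath\'{e}odory step and the existence of regular conditional probabilities genuinely require a standard-Borel/Polish hypothesis on $X$, and that is the setting in which \cite{S12} works.

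One point to tighten: for property (3), the obstruction you should worry about is not ``regularity of $f$'' --- $f$ plays no role there. Once the disintegration exists, the statement
\[
\int g\, d\mu_{f^{-1}(B(y,r))} \;=\; \frac{1}{\mu_Y(B(y,r))}\int_{B(y,r)} \Bigl(\int g\, d\mu_{y'}\Bigr)\, d\mu_Y(y') \;\longrightarrow\; \int g\, d\mu_y
\]
is exactly Lebesgue differentiation of the bounded function $y'\mapsto\int g\,d\mu_{y'}$ along metric balls of $(Y,\mu_Y)$. That requires a covering property on $Y$ (Besicovitch, Vitali, or a nested/ultrametric structure); it is not automatic for a general separable metric space. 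So the structural hypothesis you need is on $Y$ and $\mu_Y$, not on $f$. This is harmless in the paper's applications, because there $Y$ is the symbol space $\Omega=\Lambda^{\mathbb{N}}$ with its ultrametric, where balls are nested or disjoint and the differentiation theorem holds unconditionally. If you wanted a version of property (3) valid for arbitrary Polish $Y$, the cleaner route is to replace balls by a refining countable filtration of $Y$ and use the martingale convergence theorem, then remark that in ultrametric or Euclidean $Y$ the filtration can be taken to consist of balls.
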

	Then we have the following result due to \cite[Lemma 6.4]{Wu} which is a direct consequence of the conditional Shannon-McMillan-Breiman theorem, Egorov's theorem and the Portmanteau theorem.
	\begin{thm}[Wu]\label{Wu}
		Let $(X,S,\mu)$ be an ergodic dynamical system with $X$ being a Borel space. Let $\mathcal{A}$ be a finite partition of $X$ such that $\vee_{i=0}^\infty S^{-i}\mathcal{A}$ generates the sigma-algebra of $X.$ For each $x\in X$ not on the boundaries of sets in $\vee_{i=1}^{n} S^{-i}\mathcal{A}$,  for each $n\in\mathbb{N}$ we denote $A_n(x)$ the unique atom $A$ of $\vee_{i=0}^n S^{-i}\mathcal{A}$ such that $x\in A.$ 
		
		If $\mu$ does not give positive measures to boundaries of $S^{-i}\mathcal{A}$ for all $i\in\mathbb{N}$ and $h(S,\mu)>0$ then there exist a Bernoulli factor $(\Omega,S_B,\nu)$ with measurable factorization map $f:X\to \Omega$ and for each $\delta>0$ there exist a $X_\delta\subset X$ and a constant $C_\delta$ with the following properties,
		\begin{itemize}
			\item[1]:$\mu(X_\delta)>1-\delta.$
			\item[2]:For all $x\in X_\delta$ and $n\geq 1$, $\mu_{f(x)}(A_n(x))\geq C_\delta2^{-n\delta}$ and $\mu_{f(x)}$ is a probability measure.
			\item[3]:For all integers $n\geq 1$, there exists a measurable set $B^n_\delta\subset\Omega$ with $\nu(B^n_\delta)\geq 1-\delta$ and a $r=r(\delta,n)>0$ such that for all $\omega\in B^{n}_\delta$ and all atoms $A_n$ we have
			\[
			\frac{\mu(f^{-1}(B(\omega,r))\cap A_n)}{\mu(f^{-1}(B(\omega,r)))}\geq (1-\delta)\mu_{\omega}(A_n).
			\]
		\end{itemize}
	\end{thm}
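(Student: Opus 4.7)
The plan is to combine Sinai's factor theorem with the conditional Shannon--McMillan--Breiman theorem, then promote pointwise convergences to uniform ones using Egorov's theorem, and finally pass from the defining weak-$*$ limit of the Rokhlin disintegration to the quantitative statement in (3) via the Portmanteau theorem.

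First I would apply Theorem \ref{factor} to obtain a Bernoulli factor $(\Omega, S_B, \nu)$ with $h(S_B, \nu) = h(S, \mu)$, together with a measurable factor map $f : X \to \Omega$. Let $\mathcal{F} = f^{-1}(\mathcal{B}_\Omega)$ denote the pullback $\sigma$-algebra and let $\{\mu_\omega\}_{\omega \in \Omega}$ be the Rokhlin disintegration along the fibres of $f$, so that $\mu_{f(x)}$ is a probability measure supported on $f^{-1}(f(x))$ for $\mu$-a.e.\ $x$. This supplies the objects in the conclusion; properties (1)--(3) are then established in order.

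For property (2), I would invoke the conditional Shannon--McMillan--Breiman theorem relative to $\mathcal{F}$: since $\bigvee_{i \geq 0} S^{-i}\mathcal{A}$ generates the $\sigma$-algebra of $X$, for $\mu$-a.e.\ $x$,
\[
\lim_{n \to \infty} -\frac{1}{n} \log \mu_{f(x)}(A_n(x)) = h(S, \mu \mid \mathcal{F}).
\]
The Abramov--Rokhlin formula $h(S, \mu) = h(S_B, \nu) + h(S, \mu \mid \mathcal{F})$ together with the equality of the entropies gives $h(S, \mu \mid \mathcal{F}) = 0$, so the left-hand side tends to $0$ pointwise $\mu$-a.e. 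Egorov's theorem then furnishes, for every $\delta > 0$, a set $X_\delta$ with $\mu(X_\delta) > 1 - \delta$ on which the convergence is uniform; choosing $n_0$ so that $\mu_{f(x)}(A_n(x)) \geq 2^{-n\delta}$ for all $n \geq n_0$ and absorbing the finitely many exceptional levels $n < n_0$ into a constant $C_\delta > 0$ yields (2).

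For property (3), I would fix $n \geq 1$ and exploit the weak-$*$ defining limit $\mu_{f^{-1}(B(\omega,r))} \to \mu_\omega$ as $r \to 0$, which holds for $\nu$-a.e.\ $\omega$ by the definition of conditional measures on metric spaces. The assumption that $\mu$ charges no boundary of any $S^{-i}\mathcal{A}$ together with the disintegration identity shows that $\mu_\omega$ also gives zero mass to $\partial A_n$ for every atom $A_n$ at level $n$, for $\nu$-a.e.\ $\omega$. Hence by the Portmanteau theorem (Theorem \ref{PORT}),
\[
\lim_{r \to 0} \mu_{f^{-1}(B(\omega,r))}(A_n) = \mu_\omega(A_n)
\]
for each atom $A_n$. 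Since there are only finitely many atoms at level $n$, a single application of Egorov's theorem (for each atom, then intersecting) produces a set $B^n_\delta \subset \Omega$ with $\nu(B^n_\delta) \geq 1 - \delta$ and a uniform radius $r = r(\delta, n) > 0$ such that $\mu_{f^{-1}(B(\omega,r))}(A_n) \geq (1-\delta)\mu_\omega(A_n)$ for every atom $A_n$ and every $\omega \in B^n_\delta$. Unfolding the definition $\mu_{f^{-1}(B(\omega,r))}(A_n) = \mu(f^{-1}(B(\omega,r)) \cap A_n)/\mu(f^{-1}(B(\omega,r)))$ gives (3).

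The main obstacles I anticipate are, first, justifying that the conditional entropy $h(S, \mu \mid \mathcal{F})$ vanishes (the Abramov--Rokhlin identity is standard but must be applied carefully because $\mathcal{F}$ is the pullback of an equal-entropy Bernoulli factor), and second, ensuring that the no-boundary hypothesis on $\mu$ transfers to $\mu_\omega$ for $\nu$-a.e.\ $\omega$ so that Portmanteau applies simultaneously to all finitely many atoms. Both are measurability-and-Fubini issues that follow once the disintegration is in hand, but they are the delicate points where the hypotheses of the theorem are genuinely used.
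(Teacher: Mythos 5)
The paper does not give a proof of this theorem; it is quoted verbatim from Wu's paper (\cite[Lemma 6.4]{Wu}) with only the one-sentence remark that it is ``a direct consequence of the conditional Shannon-McMillan-Breiman theorem, Egorov's theorem and the Portmanteau theorem.'' Your proposal fills in exactly that sketch --- Sinai's factor theorem supplies $f$, Abramov--Rokhlin forces the relative entropy $h(S,\mu\mid\mathcal{F})$ to vanish so that conditional SMB gives the pointwise limit, Egorov upgrades it to the uniform bound in (2) after handling the finitely many small $n$ via monotonicity of $\mu_{f(x)}(A_n(x))$, and Portmanteau plus Egorov (applied to the ratios $\mu_{f^{-1}(B(\omega,r))}(A_n)/\mu_\omega(A_n)$ rather than absolute differences, to get the multiplicative bound) yields (3) --- so your argument is correct and is the same approach the paper indicates.
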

	The following result is a generalization of \cite[Theorem 6.1]{Wu}.
	\begin{thm}\label{Cor}
		We adopt the conditions in Theorem \ref{Wu}. In addition we let $\epsilon>0$ be arbitrarily chosen in $(0,1)$ and $\alpha$ be an arbitrary irrational number in $(0,1).$ For each $\delta\in (0,1)$, there is a constant $c_\delta>0$ and $X'_\delta$ with full $\mu$ measure such that the following statement holds:
		\begin{itemize}
			\item[] For all $n\geq 1,$ all $x\in X'_\delta$ and all $K\subset\mathbb{N}$ with lower natural density at least $\rho>2\delta+\epsilon,$ there is a collection $\mathcal{M}_n=\mathcal{M}_n(x,K)$ of at most $c_\delta 2^{n\delta}$ atoms of $\vee_{i=0}^n S^{-i}\mathcal{A}$ with the following properties. Denote the union of elements in $\mathcal{M}_n$ as $M_n$. We construct the following sequence
			\[
			K'(x)=\{k\in\mathbb{N}: S^k(x)\in M_n \}.
			\]
			Then the following set has Lebesgue measure at least $\epsilon$
			\[
			A_{K\cap K'(x)}(\alpha)=\overline{\{R^k_\alpha(0): k\in K\cap K'(x) \}}.
			\]
			
		\end{itemize}

	\end{thm}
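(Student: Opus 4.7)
The plan is to combine Theorem \ref{Wu} with the joint-equidistribution argument underlying Lemma \ref{BERANDROT2}, adapted to the fibrewise atom structure supplied by Theorem \ref{Wu}. Fix an auxiliary parameter $\delta_1\in(0,\delta)$ small enough that $\delta_1+\sqrt{\delta_1}\leq\delta/4$, and apply Theorem \ref{Wu}: this yields the Bernoulli factor $f\colon X\to\Omega$, the set $X_{\delta_1}$ with $\mu(X_{\delta_1})>1-\delta_1$, the constant $C_{\delta_1}$, and for each $n\geq 1$ the set $B^n_{\delta_1}\subset\Omega$ and the scale $r(\delta_1,n)>0$. For every $n$ I would cover $B^n_{\delta_1}$ by a disjoint family of balls $\{B^n_j\}_{j\in\mathcal{J}_n}$ of radius $r(\delta_1,n)$ centred at points $\omega_j\in B^n_{\delta_1}$, retaining only those $j$ with $\mu_{\omega_j}(X_{\delta_1})\geq 1-\sqrt{\delta_1}$; since $\omega\mapsto\mu_\omega(X\setminus X_{\delta_1})$ has $\nu$-integral $\leq\delta_1$, Markov's inequality discards only $\nu$-mass at most $\sqrt{\delta_1}$. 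For each retained $j$, set
\[
\mathcal{M}^{(j)}_n:=\bigl\{A:\mu_{\omega_j}(A)\geq C_{\delta_1}\,2^{-n\delta_1}\bigr\}
\]
among the atoms of $\vee_{i=0}^n S^{-i}\mathcal{A}$. By property (2), every atom meeting $X_{\delta_1}\cap f^{-1}(\omega_j)$ lies in $\mathcal{M}^{(j)}_n$, so $\mu_{\omega_j}(M^{(j)}_n)\geq 1-\sqrt{\delta_1}$ and $|\mathcal{M}^{(j)}_n|\leq C_{\delta_1}^{-1}\,2^{n\delta_1}\leq c_\delta\,2^{n\delta}$ for $c_\delta:=C_{\delta_1}^{-1}$. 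Property (3) then gives that $Y^n_j:=f^{-1}(B^n_j)\cap M^{(j)}_n$ satisfies $\mu(Y^n_j)\geq(1-\delta_1)(1-\sqrt{\delta_1})\,\nu(B^n_j)$, whence $\sum_{j}\mu(Y^n_j)\geq 1-\delta/2$.

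The second ingredient is joint equidistribution: the Bernoulli factor is weakly mixing and $R_\alpha$ is uniquely ergodic, so $(\Omega\times[0,1],\,S_B\times R_\alpha,\,\nu\times\lambda)$ is ergodic and for $\mu$-a.e.\ $x$ the orbit $(f(S^kx),R^k_\alpha(0))$ equidistributes, cf.\ \cite[Lemma 6.5]{Wu}. A small radius perturbation lets us pick the balls so that $\mu(f^{-1}(\partial B^n_j))=0$ for all $n,j$; intersecting the resulting almost-sure statements over the countable family of triples $(n,j,I)$ with $I$ a rational interval gives a single full-measure set $X'_\delta\subset X$ on which
\[
d\bigl(\{k:S^kx\in Y^n_j,\,R^k_\alpha(0)\in I\}\bigr)=\mu(Y^n_j)\,\lambda(I)
\]
for every such triple.

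Now fix $x\in X'_\delta$, $n\geq 1$, and $K\subset\mathbb{N}$ with $\underline{d}(K)\geq\rho>2\delta+\epsilon$. Since $\{k:S^kx\in\bigcup_j Y^n_j\}$ has natural density $\geq 1-\delta/2$, the lower density of $K\cap\bigcup_j\{k:S^kx\in Y^n_j\}$ is $\geq\rho-\delta/2$. Writing $K_j=K\cap\{k:S^kx\in Y^n_j\}$ and $\rho_j=\overline{d}(K_j)$, subadditivity of upper density together with $\sum_j\mu(Y^n_j)\leq 1$ produce an index $j^*$ with $\rho_{j^*}\geq(\rho-\delta/2)\,\mu(Y^n_{j^*})$. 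I would then declare $\mathcal{M}_n(x,K):=\mathcal{M}^{(j^*)}_n$, so that $K_{j^*}\subset\{k:S^kx\in M^{(j^*)}_n\}=K'(x)$. Mimicking the final paragraph of Lemma \ref{BERANDROT2}, for any open interval $I$ with $\lambda(I)>1-\rho+\delta/2$,
\[
\mu(Y^n_{j^*})\lambda(I)>(1-\rho+\delta/2)\,\mu(Y^n_{j^*})\geq\mu(Y^n_{j^*})-\rho_{j^*}=\underline{d}\bigl(\{k:S^kx\in Y^n_{j^*}\}\setminus K_{j^*}\bigr),
\]
which rules out the inclusion $\{k:S^kx\in Y^n_{j^*},R^k_\alpha(0)\in I\}\subset\{k:S^kx\in Y^n_{j^*}\}\setminus K_{j^*}$. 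Hence $K_{j^*}\cap\{k:R^k_\alpha(0)\in I\}\neq\emptyset$, every such $I$ meets $A_{K\cap K'(x)}(\alpha)$, and the finite-union approximation argument of Lemma \ref{BERANDROT2} gives $\lambda(A_{K\cap K'(x)}(\alpha))\geq\rho-\delta/2>\epsilon$.

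The main obstacle will be the joint-equidistribution identity for the fibre-atomic sets $Y^n_j\subset X$ rather than their Bernoulli projections $B^n_j\subset\Omega$: the Bernoulli factor directly supplies equidistribution only for pull-back sets $f^{-1}(B^n_j)$, and upgrading to the refined sets $Y^n_j$ requires either that $(X\times[0,1],\,S\times R_\alpha,\,\mu\times\lambda)$ itself be ergodic (equivalently, that $(X,S,\mu)$ have no eigenvalue $e^{2\pi i k\alpha}$ for $k\neq 0$) or a more delicate argument that exploits property (3) to interpolate Bernoulli-level equidistribution through the atomic partition. A secondary bookkeeping delicacy is fitting all the $\delta$-budgets---$\delta_1$ from Theorem \ref{Wu}, $\sqrt{\delta_1}$ from the Markov step on fibres, and $\delta/2$ from the density decomposition---inside the $2\delta$ margin allowed by $\rho>2\delta+\epsilon$, which is what forces the choice $\delta_1+\sqrt{\delta_1}\leq\delta/4$ at the outset.
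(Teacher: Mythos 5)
Your overall plan mirrors the paper's---both use Theorem \ref{Wu} to produce, for each $n$, a small collection of atoms supporting most of the conditional mass, and both then apply a pigeonholing argument in the spirit of Lemma \ref{BERANDROT2}. However, the gap you flag in your final paragraph is genuine and fatal to the proof as written: the density identity
\[
d\bigl(\{k:S^kx\in Y^n_j,\ R^k_\alpha(0)\in I\}\bigr)=\mu(Y^n_j)\,\lambda(I)
\]
requires $(X\times[0,1],\,S\times R_\alpha,\,\mu\times\lambda)$ to be ergodic, but the hypothesis only supplies ergodicity of $(X,S,\mu)$; nothing prevents $S$ from having the eigenvalue $e^{2\pi i\alpha}$. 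The ergodicity argument via weak mixing that underlies Lemma \ref{BERANDROT2} applies only to the Bernoulli factor, not to $X$ itself, and the sets $Y^n_j$ are not pullbacks of Borel subsets of $\Omega$ because they also carry the constraint $X_{\delta_1}$ living in $X$.

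The paper resolves this by a decomposition rather than by any direct joint equidistribution on $X$. It writes $Y_i=X_\delta\cap f^{-1}(\Omega(i))$ and splits the hitting condition $S^kx\in Y_i$ into two independent pieces: the condition $S^kx\in X_\delta$ produces a sequence $K(x)$ of density $\geq 1-\delta$ using only the one-variable Birkhoff ergodic theorem for $(X,S,\mu)$ (no coupling with $R_\alpha$ is invoked here, so no spectral obstruction arises); the condition $S^k_B f(x)\in\Omega(i)$ is a statement purely about the Bernoulli factor, for which joint equidistribution with $R_\alpha$ does hold. One then does not need $\{R^k_\alpha(0)\}_{k:S^kx\in Y_i}$ to equidistribute. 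Instead, $K(x)$ is used only to thin the given sequence $K$ to $K\cap K(x)$, which still has lower density $\geq\rho-\delta>\delta$, and Lemma \ref{BERANDROT2} is then applied verbatim at the Bernoulli level with the thinned sequence $K\cap K(x)$ and the disjoint family $\{\Omega(i)\}_{i}$ of $\nu$-total mass $\geq 1-\delta$. This produces an index $i$ for which $A_{K\cap K(x)\cap K(f(x),\Omega(i))}$ has Lebesgue measure at least $(\rho-\delta)-\delta>\epsilon$, and the inclusion $K(x)\cap K(f(x),\Omega(i))\subset\{k:S^kx\in M_n(i)\}$ finishes. So the resolution is not an equidistribution for $Y^n_j$ but a careful separation of which half of the condition needs the product ergodic theorem, paired with the observation that the other half only has to be used through its density. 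This is the ``more delicate argument'' you anticipate, but the key step is the decomposition and the two-stage density/equidistribution split, not an interpolation through property (3). Your Markov step retaining only balls with $\mu_{\omega_j}(X_{\delta_1})\geq 1-\sqrt{\delta_1}$ is a fine auxiliary device, and your cardinality estimate is correct, but both become unnecessary once the argument is restructured in the paper's way.
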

	\begin{proof}
		We use Theorem \ref{Wu} to find a set $X_\delta$ with $\mu(X_\delta)>1-\delta$. Then for each integer $n\geq 1$ we can find $B^n_\delta$ with $\nu(B^n_\delta)\geq 1-\delta$ and $r=r(\delta,n)>0.$ Without loss of generality we shall assume that $r=d^{-k}$ where $d$ is the number of digits of the Bernoulli system and $k$ is an integer. For each $\omega\in B^n_\delta$ we have
		\[
		\frac{\mu(f^{-1}(B(\omega,r))\cap A_n)}{\mu(f^{-1}(B(\omega,r)))}\geq (1-\delta)\mu_{\omega}(A_n).
		\]
		Now because of the topology we chose for $\Omega$, we see that $B(\omega,r)$ consists of all sequences in $\Omega$ with the same first $k$ digits as $\omega.$ In particular if $\omega'\in B(\omega,r)$ then $B(\omega',r)=B(\omega,r).$ This property reflects the fact that $\Omega$ is an ultrametric space. Notice that for any Bernoulli system $(\Omega,S_B,\nu)$, any ball of positive radius has positive $\nu$ measure. In particular $\mu(f^{-1}(B(\omega,r)))>0$ and by properties $(2)$ and $(3)$ in Theorem \ref{Wu}, for each $\omega'\in B^n_\delta\cap B(\omega,r)$ we have 
		\[
		\frac{\mu(f^{-1}(B(\omega,r))\cap A_n)}{\mu(f^{-1}(B(\omega,r)))}=\frac{\mu(f^{-1}(B(\omega',r))\cap A_n)}{\mu(f^{-1}(B(\omega',r)))}\geq (1-\delta)\mu_{\omega'}(A_n)\geq (1-\delta)C_{\delta} 2^{-n\delta}
		\]
		whenever $A_n=A_n(x)$ for some $x\in X_\delta\cap f^{-1}(\omega')$. Now it is possible to see that for all $x$ in the set $X_\delta\cap f^{-1}(B(\omega,r)\cap B^n_\delta)$ we have 
		\[
		\frac{\mu(f^{-1}(B(\omega,r))\cap A_n(x))}{\mu(f^{-1}(B(\omega,r)))}\geq (1-\delta)C_{\delta} 2^{-n\delta}.
		\]
		On the other hand we clearly have
		\[
		\sum_{\text{atoms }A_n}\frac{\mu(f^{-1}(B(\omega,r))\cap A_n)}{\mu(f^{-1}(B(\omega,r)))}=1.
		\]
		Since $\mu$ is not supported on boundaries of any atom we see that $X_\delta\cap f^{-1}(B(\omega,r)\cap B^n_\delta)$ can intersect at most
		\[
		\frac{2^{n\delta}}{(1-\delta)C_\delta}
		\]
		many atoms of  $\vee_{i=0}^n S^{-i}\mathcal{A}$ since different atoms can intersect only on boundaries. Now let $Y(\omega)=X_\delta\cap f^{-1}(B(\omega,r)\cap B^n_\delta)$. Since there are only finitely many $r$ balls in $\Omega$ we see that as $\omega$ varies in $B^n_\delta$ there are finitely many different sets of form $Y(\omega).$ Denote the collection of these sets as $\{Y_1,\dots, Y_{N(n)}\}$ where $N(n)$ is an integer. For each $i\in\mathcal{I}=\{1,\dots,N(n)\}$, let $\Omega(i)\subset B^n_\delta$ be the set of form $B(\omega,r)\cap B^n_\delta$ such that $Y_i=X_\delta\cap f^{-1}(\Omega(i)).$ We notice here that the union of all $Y_i$ is a rather large subset of $X$, more precisely we have the following result,
		\[
		\mu\left(\bigcup_{i\in\mathcal{I}}Y_i\right)= \mu\left(X_\delta\cap f^{-1}(B^n_\delta)\right)\geq 1-2\delta.
		\]
		For each $i\in \mathcal{I}$ we write the collection of atoms intersecting $Y_i$ as $\mathcal{M}_n(i)$ and write their union as $M_n(i).$ Then we saw that
		\[\#\mathcal{M}_n(i)\leq     \frac{2^{n\delta}}{(1-\delta)C_\delta}.\]
		Now we consider the following sequence for $x\in X,$
		\[
		K(x)=\{k\in\mathbb{N}: S^k(x)\in X_\delta \}
		\]
		by the ergodic theorem we see that for $\mu$ almost all $x\in X,$
		$
		K(x)
		$ 
		has natural density at least $1-\delta.$ For each $i\in\mathcal{I}$ and $x\in X$ we construct the following set
		\[
		K'_i(x)=\{k\in\mathbb{N}: S^k(x)\in M_n(i) \}
		\]
		and we see that
		\begin{eqnarray*}K(x)\cap K'_i(x)&=&\{k\in\mathbb{N}: S^k(x)\in M_n(i)\cap X_\delta\}\\
			&\supset& \{k\in\mathbb{N}: S^k(x)\in Y_i\}\\
			&=&\{k\in\mathbb{N}: S^k(x)\in f^{-1}(\Omega(i))\cap X_\delta\}\\
			&=&K(x)\cap K(f(x),\Omega(i)).\end{eqnarray*}
		By Lemma \ref{BERANDROT2} and Theorem \ref{KK} we see that for $\mu$ almost all $x\in X$ and any sequence $K$ with lower natural density at least $2\delta+\epsilon$ there exists an $i\in\mathcal{I}$ such that
		\[
		A_{K\cap K(x)\cap K(f(x),\Omega(i))}
		\]
		has Lebesgue measure at least $\epsilon.$ This is because $K\cap K(x)$ has lower natural density at least $\delta+\epsilon$ for $\mu$ almost all $x\in X$ and $\sum_{i\in\mathcal{I}}\nu(\Omega(i))=\mu(B^n_\delta)\geq 1-\delta$. This theorem follows since the above argument holds for all $n\geq 1$ and we can find a full measure set $X'_\delta\subset X$ which satisfies all our requirements.
	\end{proof}
	\section{Large sets, Bernoulli factors}\label{LARGE}
	We now deal with the case when $\Haus A_2+\Haus A_3\in (1/2,1).$
	\begin{proof}[Proof of Theorem \ref{THMONE}]
		For the moment let $A\subset\mathbb{R}^2$ be an arbitrary compact set. We define the following function $g_A:\mathbb{R}^2\times [0,1]\to \{0,1\}.$ For $(a,t)\in \mathbb{R}^2\times [0,1],$ we assign the value
		\[
		g_A(a,t)=1 \text{ if and only if }  ([a+0.5v_t,a+v_t]\cup [a-0.5v_t,a-v_t])\cap A\neq\emptyset,
		\]
		where we use $[x,y]$ with $x,y\in\mathbb{R}^2$ for the line segment from $x$ to $y$ and $v_t$ is the vector with slope $3^t$ whose $Y$-projection has length $1$. To see that $g_A$ is measurable it is enough to see that $\{g(a,t)=0\}$ is Borel measurable. Let $(a,t)\in \mathbb{R}^2\times [0,1]$ be such that $([a+0.5v_t,a+v_t]\cup [a-0.5v_t,a-v_t])\cap A=\emptyset.$ Since $A$ is compact, for each $\eta\in [0,5,1]\cup [-1,-0.5]$ we see that $a+\eta v_t\notin A$ and therefore there exists positive number $r(\eta)>0$ such that $B(a+\eta v_t,r(\eta))\cap A=\emptyset.$ We know that the segment $([a+0.5v_t,a+v_t]\cup [a-0.5v_t,a-v_t])$ is compact, therefore there exist positive number $r>0$ such that $([a+0.5v_t,a+v_t]\cup [a-0.5v_t,a-v_t])^r\cap A=\emptyset,$ where $([a+0.5v_t,a+v_t]\cup [a-0.5v_t,a-v_t])^r$ is the $r$-neighbourhood of $[a+0.5v_t,a+v_t]\cup [a-0.5v_t,a-v_t].$ Then it is easy to see that there exist two positive numbers $r(a),r(t)$ such that for each $(a',t')\in \mathbb{R}^2\times [0,1]$ with $|a'-a|<r(a)$ and $|t'-t|<r(t)$ we have
		\[
		([a+0.5v_{t'},a+v_{t'}]\cup [a-0.5v_{t'},a-v_{t'}])\cap A=\emptyset.
		\]
		This shows that $\{g_A(a,t)=0\}$ is in fact an open set and therefore $g_A$ is measurable.
		
		Now let $A=(A_3\times A_2)\cup (A_3\times A_2+(0,\pm 1))\cup (A_3\times A_2+(\pm 1,0))\cup (A_3\times A_2+(\pm 1,\pm 1))$ (there are in total $9$ translated copies of $A_3\times A_2$) and $\alpha=\log 2/\log 3$. In what follows we omit the subscript $A$ in $g_A.$ 
		
		Suppose the uniform sparseness condition does not hold. We shall restrict to the special case when $u\in [1,3)$ and the general case follows similarly. We see that there is a positive number $\rho$, a sequence $\{l_k\}_{k\geq 1}$ of lines with slope $\{u_k\}_{k\geq 1}\subset [1,3)$, a sequence $\{a_k\}_{k\geq 1}\subset A_3\times A_2$ of points with $a_k\in l_k, k\geq 1$ and sequences of integers $N_k,n_k$ with $N_k\to\infty$ such that
		\[
		\frac{1}{N_k}\#W(l_k,a_k)\cap [n_k+1,n_k+N_k]\geq \rho>0.
		\]
		For convenience, we have written $W(l_k,a_k)$ for $W(l_k\cap A,a_k).$ Since we are always considering the intersection, it is possible to drop the $\cap A$ without causing confusions.  Denote $t_k=\log u_k/\log 3\in [0,1).$ Now we define a dynamical system $(U,S,\mu)$ according to this initial choice of $a_k,l_k$. First we set $U=[0,1]^2\times [0,1].$ Let $a=(x,y)\in A_3\times A_2$ and $\theta\in [0,1)$
		\[
		S(a,\theta)=(T((x,y),\theta),R_{\alpha}(\theta)),
		\]
		\[
		T((x,y),\theta)=
		\begin{cases*}
		(x,2y \mod 1), \text{ if $\theta+\alpha\leq 1$}\\
		(3x\mod 1,2y\mod 1), \text{ if $\theta+\alpha>1$}.
		\end{cases*}
		\]
		We notice that for any $a\in A$ and any $t\in [0,1]$ the orbit of $(a,t)$ always lies in $(A_3\times A_2)\times [0,1].$   Having defined the dynamics we now construct a measure.
		Denote $x_k=(a_k,t_k).$ For each $k$ we construct the following measure in $\mathcal{P}(U).$ 
		\[
		\mu_k=\frac{1}{N_k}\sum_{i=n_k+1}^{n_k+N_k} \delta_{S^i(x_k)}.
		\]
		Then by taking a sub-sequence if necessary we assume that 
		\[
		\mu_k\to \mu
		\]
		weakly in $\mathcal{P}(U).$ This measure $\mu$ is not necessarily $S$-invariant because it might give positive measure on the discontinuities of $S$. If we identify  $[0,1]^2$ with $\mathbb{R}^2/\mathbb{Z}^2=\mathbb{T}^2$ then $S$ is discontinuous precisely at points $(a',t')$ with $t'=1-\alpha.$ This is where we are about to choose a different multiplication map for the $[0,1]^2$ component. However it is easy to see that the projection of $\mu$ onto the $[0,1]$ component is precisely the Lebesgue measure because $\alpha\notin \mathbb{Q}$ and $R_\alpha$ is uniquely ergodic. Thus $S$ is $\mu$-a.e. continuous and therefore $\mu$ is $S$-invariant (see Theorem \ref{PORT}, statement 3). Now we take a $\mu$-typical $x\in U$. Suppose that $x=(a',t')$, we want to estimate the following average,
		\[
		\lim_{N\to\infty}\frac{1}{N}\sum_{i=0}^{N-1} g(S^{i}(a',t')).
		\] 
		Thus for $\mu$.a.e $(a',t')$  we denote $\mu_{a',t'}$ to be the ergodic component of $(a',t')$ and we see that,
		\[
		\lim_{N\to\infty}\frac{1}{N}\sum_{i=0}^{N-1} g(S^{i}(a',t'))=\int g(a,t)\mu_{a',t'}(a,t).
		\]
		Suppose $\sigma(a',t')$ is the ergodic disintegration measure of $\mu$ against the $S$-invariant $\sigma$-algebra we see that
		\begin{eqnarray*}
			\int\int g(a,t)\mu_{a',t'}(a,t)\sigma(a',t')&=&\int g(a,t)\mu(a,t)\\
			&\geq&\limsup_{k\to\infty} \frac{1}{N_k}\sum_{i=n_k+1}^{n_k+N_k} g(S^i(x_k))\\
			&\geq&\limsup_{k\to\infty} \frac{1}{N_k}\#W(l_k,a_k)\cap [n_k+1,n_k+N_k]\geq \rho>0.
		\end{eqnarray*}
		In the second step, we have used the fact that $\{g(a,t)=1\}$ is a closed set and we also used the Portmanteau theorem ( Theorem \ref{PORT}, statement 2). For the third step we used the fact that $A_3$ is $\times 3\mod 1$ invariant and $A_2$ is $\times 2 \mod 1$ invariant. We would get equality in the third step if the sets $A_2,A_3$ would be strictly invariant under the maps $\times 2,\times 3$ respectively. Intuitively we transferred the upper Banach density in our initial data to the upper natural density almost surely along the orbit average. For this reason, for each $(a,t)\in U$ we denote $W'(a,t)$ to be the following sequence,
		\[
		W'(a,t)=\{k\in\mathbb{N}: g(S^{k}(a,t))=1 \}.
		\]
		We see that there is an ergodic component $\mu_{a',t'}$ such that
		\[
		\int g(a,t)\mu_{a',t'}(a,t)\geq \rho.
		\]
		Consider now the dynamical system $(U,S,\mu_{a',t'})$, it is ergodic by construction with the property that for $\mu_{a',t'}$ almost all $(a'',t'')\in U$
		\[
		\lim_{N\to\infty}\frac{1}{N}\sum_{i=0}^{N-1} g(S^{i}(a'',t''))\geq \rho.
		\] 
		In order to apply Theorem \ref{Cor} we need to address some issues. We divide the rest of proof into three subsections.
		\subsection{Partitions and boundaries}
		First we take an initial partition $\mathcal{A}$ of $[0,1]^2\times [0,1]$ by taking
		\[
		A_{i,j}\times C_k,
		\]
		where we define for $(i,j)\in \{0,1,2\}\times\{0,1\}$ the following partition $\mathcal{B}$ of $[0,1]^2$
		\[
		A_{i,j}=\left[\frac{i}{3},\frac{i+1}{3}\right]\times \left[\frac{j}{2},\frac{j+1}{2}\right],
		\]
		and a partition $\mathcal{C}$ of $[0,1]$ as $C_1=[1-\alpha,1], C_2=[0,1-\alpha).$ We see that $\vee_{i=0}^\infty S^{-i} \mathcal{A}$ generates the Borel $\sigma$-algebra of $[0,1]^2\times S^1$ and therefore $h(S,\mu_{a',t'})=h(S,\mu_{a',t'},\mathcal{A}).$ Our first issue is that $\mu_{a',t'}$ could give positive measure on boundaries of $\{S^{-i} \mathcal{A}\}_{i\geq 0}$. We see that the $[0,1]$ component of $\mu_{a',t'}$ is $+\alpha \mod 1$ invariant and thus it is the Lebesgue measure. If $\mu_{a',t'}$ does give positive measure on boundaries of $\{S^{-i} \mathcal{A}\}_{i\geq 0}$ then its $[0,1]^2$ component gives positive measure on boundaries of the $[0,1]^2$ component of $\{S^{-i} \mathcal{A}\}_{i\geq 0}$, which are rectangles with edges that project to either dyadic rational numbers on $Y$-axis or triadic rational numbers on $X$-axis. In this case the $Y$-component of $\mu_{a',t'}$ is then supported on finitely many rational numbers since it is $\times 2\mod 1$ invariant and we can focus on the $X$-component. For the other case, the projection on $X$-axis does not define a dynamical system. In this case it can be seen that the $[0,1]^2$ component of $\mu_{a',t'}$ supports on finitely many horizontal lines with rational $X$-coordinates. Suppose the former case and the later case can be treated in a similar way. We consider the following dynamical system
		\[
		(A_3\times [0,1], S_X,\mu^X_{a',t'}).
		\]
		Here $S_X$ is defined as follows,
		\[
		S_X(x,\theta)=(T_X(x,\theta),R_{\alpha}(\theta)),
		\]
		\[
		T_X(x,\theta)=
		\begin{cases*}
		x, \text{ if $\theta+\alpha\leq 1$}\\
		3x\mod 1, \text{ if $\theta+\alpha>1$},
		\end{cases*}
		\]
		and $\mu^X_{a',t'}$ is the corresponding projected measure. If $\mu^X_{a',t'}$ still supports on boundaries we see that the $[0,1]^2$ component of $\mu_{a',t'}$ supports on finitely many rational points and this case the result is obvious. Therefore we can assume that at least one of the $X$ or $Y$ coordinate projections of $\mu_{a',t'}$ does not support on boundaries and we then perform the following entropy arguments for either $(U,S,\mu_{a',t'})$ or one of its projections. We only illustrate the argument for $(U,S,\mu_{a',t'})$ and the arguments for its projections are similar.
		\subsection{Zero entropy}
		We now consider the case when $h(S,\mu_{a',t'})=0$. In this case for each integer $n\geq 1$, the atoms of $\vee_{i=0}^n S^{-i} \mathcal{A}$ are of form $B\times C$ where $B\subset [0,1]^2$ is a rectangle with dimension $3^{-n'}\times 2^{n}$ where $n'$ satisfies $2^{-n}\leq 3^{-n'}\leq 3\times 2^{-n}$ (so the rectangle is almost a square) and $C$ is one of the atoms of $\vee_{i=0}^n R^{-i}_{\alpha} \mathcal{C}.$ The number of atoms in $\vee_{i=0}^n R^{-i}_{\alpha} \mathcal{C}$ is at most $2n$ and for each $C$ the number of different atoms $B\times C$ is between $2^{2n}/3$ and $3\times 2^{2n}.$ Now if the entropy $\frac{1}{n}H(\mu_{a',t'},\vee_{i=0}^n S^{-i} \mathcal{A})$ is smaller than a given small number $\epsilon$ for all large enough $n$ then there exist $\delta(\epsilon)=O(\epsilon)$ such that $O(2^{\delta n})$ many atoms in $\vee_{i=0}^n S^{-i} \mathcal{A}$ support at least $1-\delta$ portion of $\mu_{a',t'}$ measure. 
		
		To see this, let $V$ be a finite set of points of cardinality greater than $2^n$ and for each $v\in V$ we give a probability $p_v\in (0,1)$ such that $\sum_{v\in V}p_v=1.$ If the entropy, namely $-\sum_{v\in V} p_v\log p_v<n\epsilon'$ for a number $\epsilon'>0$, then for another number $\delta'>0$ we define the following subset
		\[
		V_{\delta'}=\{v\in V: p_v\geq 2^{-n\delta'}\}.
		\]
		Then it is easy to see that
		\[
		\sum_{v\in V}p_v(-\log p_v)=\sum_{v\in V_{\delta'}}p_v(-\log p_v)+\sum_{v\notin V_{\delta'}}p_v(-\log p_v)<n\epsilon'.
		\]
		If $v\notin V_{\delta'}$ then $-\log p_v>n\delta'\log 2$ and therefore
		\[
		n\epsilon'>\sum_{v\notin V_{\delta'}}p_v(-\log p_v)> \sum_{v\notin V_{\delta'}} n\delta' p_v\log 2.
		\]
		Then we see that
		\[
		\sum_{v\in V_{\delta'}}p_v=1-\sum_{v\notin V_{\delta'}}p_v>1-\frac{1}{\log 2}\frac{\epsilon'}{\delta'}.
		\]
		On the other hand because $\sum_{v\in V}p_v=1$ we see that
		\[
		\#V_{\delta'}\leq 2^{n\delta'}.
		\]
		Then we choose $\delta'=\sqrt{\epsilon'}$ and we see that at least $1-\delta'/\log 2$ portion of the measure $p_v,v\in V$ is supported in a collection of less than $2^{n\delta'}$ many points in $V$.
		
		Now we apply the above result to our dynamical system $(U,S,\mu_{a',t'})$ and denote the collection of those $O(2^{\delta n})$ atoms in $\vee_{i=0}^n S^{-i} \mathcal{A}$ as $\mathcal{M}_n$ and their union as $M_n$. We can choose an $\epsilon>0$ such that $\delta=\delta(\epsilon)<\rho.$ Now because $\mu_{a',t'}$ is an ergodic component of $a',t'$ we see that by the ergodic theorem, for $\mu_{a',t'}$.a.e $(a'',t'')\in U,$
		\[
		\{k\in\mathbb{N}: S^k(a'',t'')\in M_n \}
		\]
		has natural density at least $1-\delta$. Since $1-\delta+\rho>1$ we see that
		\[
		K=\{k\in\mathbb{N}: S^k(a'',t'')\in M_n \}\cap W'(a'',t'')
		\]
		has natural density at least $\rho-\delta.$ For each $k\in K$ we see that $S^k(a'',t'')\in M_n$ and $g(S^k(a'',t''))=1.$ Denote $a''_k,t''_k$ to be the $[0,1]^2$ and $[0,1]$ components of $S^k(a'',t'')$ respectively. Then we can find a point $b''_k\in A$ such that $|\pi_Y(a''_k-b''_k)|\in [0.5,1]$ and the line segment $[a''_k,b''_k]$ has slope $3^{t''_k}.$ This implies that $|a''_k-b''_k|\in [0.5,\sqrt{2}]\subset [1/6,1.5].$ 
		
		As $\mathcal{M}_n$ is a collection of at most $O(2^{\delta n})$ many atoms in $\vee_{i=0}^{n}S^{-i}\mathcal{A},$ the $[0,1]^2$ component of $\mathcal{M}_n$ consist at most $O(2^{\delta n})$ many almost squares, notices that they are not necessary disjoint. We denote the $[0,1]^2$ component of $\mathcal{M}_n$ as $\mathcal{Q}_n$ and we see that for each $k\in K$, there exist a $Q\in\mathcal{Q}_n$ such that $a''_k\in Q.$ It is also easy to see that $t''_k=R^{k}_{\alpha}(t'').$ Let $t^*$ be a limit point of $\{t''_k\}_{k\in K}.$ Since $\mathcal{Q}_n$ is a finite collection of closed sets we see that $\bigcup_{Q\in\mathcal{Q}_n}Q$ s a closed set. Assume that $\lim_{j\to\infty} t''_{k_j}=t^*$ for a subsequence $\{k_j\}_{j\in\mathbb{N}}$ of $K.$ Then $\{a''_{k_j}\}_{j\in\mathbb{N}}$ has a limit point $a^*$ in $\bigcup_{Q\in\mathcal{Q}_n}Q.$ Since $A_3,A_2$ are compact we see that this limit point is contained in $A_3\times A_2$ as well. Moreover since $A$ is also compact we see that we can assume the sequence $\{b_{k_j}\}_{j\in\mathbb{N}}$ converges to a limit point $b^*$ in $A$ and $|b^*-a^*|\in [1/6,1.5]$ and the line segment $[a^*,b^*]$ has slope $3^{t^*}.$ We have seen that $A_K=\overline{\{R^k_\alpha(t''): k\in K}\}$ has Lebesgue measure at least $\rho-\delta,$ see Lemma \ref{Target}. Recall the smooth map $e:[0,1]\to S^1$ constructed in the proof of Theorem \ref{THMHALF}. Then it is easy to see that there exists constant $c>0$ such that we can find a set $E$ of directions with Lebesgue measure at least $c(\rho-\delta)$ such that for each $e'\in E$ there is a point $x_{e'}\in A_3\times A_2$ and $Q\in\mathcal{Q}_n$ such that $x_{e'}\in Q.$ Moreover there exists $y_{e'}\in A$ with distance $|x_{e'}-y_{e'}|\in [1/6,1.5]$ and 
		\[
		\frac{y_{e'}-x_{e'}}{|y_{e'}-x_{e'}|}=e'.
		\] 
		We notice that $A$ and $A_3\times A_2$ have the same Hausdorff dimension. For all large enough integers $n$, we can find at least $0.5c(\rho-\delta) 2^{n}$ many $2^{-n}$-separated directions in $E$ and we denote this collection of direction as $\mathcal{E}_n.$ By the pigeonhole principle we see that there exists $Q\in\mathcal{Q}_n$ such that it contains $O(2^{-\delta n}(0.5c(\rho-\delta) 2^{n}))$ many points of form $\{x_{e'}\}_{e'\in \mathcal{E}_n}.$ Then the corresponding points $y_{e'}$ are all at least $0.5/2^n$-separated. As this holds for all large enough $n$ we see that this implies that $\ubox A\geq 1-\delta$ but we constructed $\delta=O(\epsilon)$ therefore by letting $\epsilon$ be small enough we obtain a contradiction because we assumed that $\Haus A_3\times A_2=\ubox A_3\times A_2<1$.
		\subsection{Positive entropy}
		Now finally we can assume that $(U,S,\mu_{a',t'})$ has positive entropy, that is, $h(S,\mu_{a',t'})>0.$ We saw that for $\mu_{a',t'}$ almost all $x=(a'',t'')\in U$, $W'(a'',t'')$ has lower natural density at least $\rho.$ Now we want to apply Theorem \ref{Cor}. Let $\delta>0$ be such that $\rho>2\delta.$ Then exists a constant $c_\delta>0$ and for each $n\geq 1$ there exist a set $U_\delta\subset U$ with full $\mu_{a',t'}$ measure such that for each $x\in U_\delta$, there is a collection $\mathcal{M}_n$ of at most $c_\delta 2^{\delta n}$ atoms in $\vee_{i=0}^n S^{-i} \mathcal{A}$ with union $M_n$ such that 
		\[
		A_{W'(a'',t'')\cap \{k\in\mathbb{N}: S^k(x)\in M_n\}}
		\]
		has Lebesgue measure at least $\rho-2\delta.$ Then the rest of the argument is the same as that of the zero entropy case. 
	\end{proof}
	\section{Further remarks and problems}
	\subsection{Casinos with multidimensional clocks}\label{HIGH}
	In this paper we only considered problems related to intersections between two invariant sets. One reason is that in Theorem \ref{Cor} we coupled a Bernoulli system with an irrational rotation on the unit circle. There is no problem if we replace the irrational rotation with an irrational torus rotation. Let $\mathbb{T}^k$ be the unit torus. We view it as $[0,1]^k.$ Suppose that $\alpha_1,\dots,\alpha_k$ are irrational numbers which are linearly independent over the field of rational numbers. Then the action
	\[
	(x_1,\dots,x_k)\to (x_1+\alpha_1\mod 1,\dots,x_k+\alpha_k\mod 1)
	\]
	is an irrational torus rotation. Like its one dimensional brother, irrational torus rotations are uniquely ergodic with the Lebesgue measure. One can also study discrepancy estimates, see \cite{DT97}. All results in Section \ref{SINAI} can be generalized in this way. Let $p_1,\dots,p_k$ be $k\geq 2$ integers such that $1, \log p_1/\log p_2, \dots, \log p_1/\log p_k$ are linearly independent over the field of rational numbers. We can consider $l\cap A_{p_1}\times\dots\times A_{p_k}$ with a line $l$ in $\mathbb{R}^k$ which is not parallel with the coordinate axis. We also assume that $l$ is not contained in any subspaces generated by coordinate axis, otherwise we can drop some of $A_{p_1},\dots,A_{p_k}$. To see how to obtain a torus rotation, let $(x_1,\dots,x_k,\theta_2,\dots,\theta_k)\in [0,1]^{2k-1},$ we define the following map (which can be viewed as a higher dimensional version of the map $T$ defined in the proof of Theorem \ref{THMONE})
	\[
	T(x_1,\dots,x_k,\theta_2,\dots,\theta_k)=(y_1,\dots,y_k,\{\theta_2+\log p_2/\log p_1\},\dots,\{\theta_k+\log p_k/\log p_1\})
	\]  
	where $y_1,\dots,y_k$ are determined as follows
	\[
	y_1=\{p_1 x_1\}
	\]
	and for each $i\in\{2,\dots,k\},$
	\[
	y_i=\begin{cases}
	x_i & \text{ if $\theta_i-\log p_1>0$}\\
	\{p_i x_i\} & \text{ else}.
	\end{cases}
	\]
	Now we allow the direction vector of $l$ range inside $S^{k-1}$ whose coordinate components are contained in $(\delta,1-\delta)$ where $\delta>0$ can be arbitrarily chosen. Then if $l\cap A_{p_1}\times\dots\times A_{p_k}$ is large (in terms of sparseness which can be defined similarly for lines in $\mathbb{R}^k$), by using the torus rotation with vector $(\log p_1/\log p_2,\dots,\log p_1/\log p_k)$ we see that $A_{p_1}\times\dots\times A_{p_k}$ would have dimension at least $k-1$, the dimension of $S^{k-1}.$ Therefore we can upgrade Theorem \ref{THMONE} for intersections among more than two sets. As the main technical steps are the same for all $k\geq 2,$ we only illustrated the proof for $k=2$ in which case we have a better visualization. To be precise, we state the following higher dimensional version of Corollary \ref{UniBox}.
	\begin{cor}
		Let $k\geq 2$ be an integer. Let $A_{p_1},\dots, A_{p_k}$ be $k$ closed invariant subsets of $[0,1]$ with respect to $\times p_1 \mod 1, \times p_2 \mod 1\dots$ respectively. Assume that $\log p_1/\log p_i$ for $i\in\{2,\dots,k\}$ are irrational numbers which are linearly independent over the field of rational numbers. Suppose that $\sum_{i=1}^k \Haus A_{p_i}<k-1$ then for each $2k$-tuple $u_1,\dots, u_k, v_1,\dots,v_k$ of non-zero real numbers we have
		\[
		\ubox \cap_{i=1}^k (u_i A_{p_i}+v_i)=0.
		\]  
		Moreover, let $\delta>0$ be an arbitrarily chosen positive number and suppose that $\delta<|u_i|<\delta^{-1}$ for each $i\in\{1,\dots,k\}.$ Then for each $\epsilon>0,$ there is an integer $N_\epsilon>0$ such that
		\[
		N(\cap_{i=1}^k (u_i A_{p_i}+v_i),2^{-N})\leq N^{\epsilon}
		\]
		for all $N\geq N_\epsilon.$ The choice of $N_\epsilon$ does not depend on $u_i,v_i.$ 
	\end{cor}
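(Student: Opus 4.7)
The plan is to mimic the proof of Theorem \ref{Thone} with two upgrades: replace the circle rotation $R_\alpha$ by the torus rotation $R_{\boldsymbol{\alpha}}$ on $[0,1]^{k-1}$ with vector $\boldsymbol{\alpha}=(\log p_2/\log p_1,\dots,\log p_k/\log p_1)$, which is uniquely ergodic by the rational-independence hypothesis and by Weyl's criterion; and strengthen the dipole-direction argument of Section \ref{Dip} to $\mathbb{R}^k$. Since box dimension is stable under products and equals Hausdorff dimension on each $A_{p_i}$ by \cite[Theorem 5.1]{Fu}, the hypothesis $\sum\Haus A_{p_i}<k-1$ translates into $\ubox(A_{p_1}\times\dots\times A_{p_k})<k-1$. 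I argue by contradiction: if the covering bound fails uniformly, then there exist $\gamma>0$, $N_j\to\infty$, and $(\boldsymbol{u}_j,\boldsymbol{v}_j)\in(\delta,\delta^{-1})^k\times\mathbb{R}^k$ with $N(\cap_i(u_{j,i}A_{p_i}+v_{j,i}),2^{-N_j})\geq 2^{\gamma N_j}$.

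Exactly as in Theorem \ref{Thhalf}, this produces many pairs $(x,y)$ in a finite translate $A$ of the product $A_{p_1}\times\dots\times A_{p_k}$ with $|y-x|\in[1/6,1.5]$ and directions prescribed by an orbit of $R_{\boldsymbol{\alpha}}$. I set up the skew product $T$ on $U=[0,1]^k\times[0,1]^{k-1}$ already displayed in Section \ref{LARGER}, build empirical measures $\mu_j=N_j^{-1}\sum_{i=n_j+1}^{n_j+N_j}\delta_{T^i(x_j)}$ along the bad data, and pass to a weak-$*$ limit $\mu$. Because $R_{\boldsymbol{\alpha}}$ is uniquely ergodic, the $[0,1]^{k-1}$-marginal of $\mu$ is Lebesgue, so $T$ is $\mu$-almost everywhere continuous (its discontinuity locus has zero marginal mass), and Theorem \ref{PORT} gives $T$-invariance of $\mu$. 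I then generalize the function $g_A$ from the proof of Theorem \ref{Thone}, letting $g_A(a,\boldsymbol{t})=1$ whenever the two opposite line segments of length $1$ based at $a$ in the direction $e(\boldsymbol{t})\in S^{k-1}$ determined by the slopes meet $A$; this is upper semi-continuous and Portmanteau forces $\int g_A\,d\mu\geq\gamma$. Some ergodic component $\mu_*$ then also satisfies $\int g_A\,d\mu_*\geq\gamma$.

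Next I split into zero and positive entropy for $(U,T,\mu_*)$ against the product partition $\mathcal{A}=\mathcal{B}\times\mathcal{C}$, where $\mathcal{B}$ is the natural $p_i^{-1}$-rectangular partition of $[0,1]^k$ and $\mathcal{C}$ is the partition of $[0,1]^{k-1}$ that distinguishes which branch of $T$ fires next. Boundary atoms are handled exactly as in the proof of Theorem \ref{Thone} by projecting onto a coordinate hyperplane, with the fixed-point case absorbed into the rational case of Theorem \ref{Thra}. In the zero-entropy case the elementary entropy estimate from Section \ref{LARGE} yields a collection of at most $O(2^{\delta n})$ atoms of $\vee_{i=0}^n T^{-i}\mathcal{A}$ carrying all but $\delta$ of the mass. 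In the positive-entropy case I invoke the torus analogue of Theorem \ref{Cor}; the proofs of Lemmas \ref{BERANDROT}--\ref{BERANDROT2} and Theorems \ref{KK}--\ref{Cor} transfer verbatim because they use only unique ergodicity of $R_{\boldsymbol{\alpha}}$ and equidistribution of its orbits in $[0,1]^{k-1}$. In either case, for $\mu_*$-almost every $(a'',\boldsymbol{t}'')\in U$ and all large $n$, there is a family $\mathcal{Q}_n$ of at most $c_\delta 2^{\delta n}$ almost-cubes of side $\sim 2^{-n}$ in $[0,1]^k$ and a set $K\subset\mathbb{N}$ of lower natural density $\geq\gamma-2\delta$ along which $T^n(a'',\boldsymbol{t}'')\in M_n:=\bigcup\mathcal{Q}_n$ and $g_A(T^n(\cdot))=1$.

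The main obstacle is the promised $\mathbb{R}^k$ dipole argument. Along $K$ the clock coordinates $R^n_{\boldsymbol{\alpha}}(\boldsymbol{t}'')$ accumulate on a set of positive $(k-1)$-dimensional Lebesgue measure in $[0,1]^{k-1}$, and the smooth slope-to-direction map $e\colon[0,1]^{k-1}\to S^{k-1}$ pushes this forward to a set $E\subset S^{k-1}$ of positive $\mathcal{H}^{k-1}$-measure. Extracting a maximal $2^{-n}$-separated subset $E_n\subset E$ gives $\#E_n\gtrsim 2^{n(k-1)}$, and for each $e\in E_n$ I have $x_e\in M_n$ together with a partner $y_e\in A$, $|y_e-x_e|\in[1/6,1.5]$, pointing in direction $e$. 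By pigeonhole some cube $Q\in\mathcal{Q}_n$ contains at least $2^{n(k-1-\delta)}/c_\delta$ of the $x_e$, and the elementary bound $|y_e-y_{e'}|\geq c\,|e-e'|-|x_e-x_{e'}|\gtrsim 2^{-n}$ shows their partners are pairwise $\sim 2^{-n}$-separated. Hence $N(A,2^{-n})\gtrsim 2^{n(k-1-\delta)}$; letting $\delta\downarrow 0$ forces $\ubox A\geq k-1$, contradicting the hypothesis. Since the entire argument is extracted from data varying in the compact parameter region $(\delta,\delta^{-1})^k\times\mathbb{R}^k$, the resulting threshold $N_\epsilon$ depends only on $\epsilon$ and $\delta$, which yields the claimed uniform covering estimate and, in particular, $\ubox(\cap_i(u_iA_{p_i}+v_i))=0$.
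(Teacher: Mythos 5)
Your proposal is correct and follows essentially the same route as the paper. The paper itself gives only a sketch for this corollary, stating that ``the main technical steps are the same for all $k\geq 2$'' and that one should replace the circle rotation by the torus rotation with vector $(\log p_2/\log p_1,\dots,\log p_k/\log p_1)$ and the $S^1$ dipole-direction argument by an $S^{k-1}$ one; your write-up fleshes out exactly this program (torus unique ergodicity, almost-everywhere continuity of the skew product, the zero/positive-entropy dichotomy via Theorem~\ref{Cor}, and a $(k-1)$-dimensional dipole pigeonhole), so there is no substantive deviation to report.
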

	There is one important point to note. For $k\geq 3,$ it is surprisingly not an easy task to produce even one example of integers $p_1,\dots,p_k$ to satisfy the condition that
	\begin{align*}
	1,\frac{\log p_1}{\log p_2},\dots,\frac{\log p_1}{\log p_k}\tag{*}
	\end{align*}
	are $\mathbb{Q}$-linearly independent. Problems of this kind are related to the study of algebraic relations among logarithms of algebraic numbers.  We now show that a conjecture of Schanuel (see below) implies the above $\mathbb{Q}$-linear independence as long as $p_1,\dots,p_k$ are multiplicatively independent, i.e.,
	\[
	1,\frac{\log p_2}{\log p_1},\dots,\frac{\log p_k}{\log p_1}
	\]
	are $\mathbb{Q}$-linearly independent, for example, $p_1=2,p_2=3,p_3=5.$
	
	Let $P_k(x_1,\dots,x_k)$ be the symmetric polynomial of degree $(1,\dots,1)$ (there are $k-1$ many ones). For example, we have
	\[
	P_3(x_1,x_2,x_3)=x_1x_2+x_1x_3+x_2x_3.
	\]
	For the above $\mathbb{Q}$-linear independence of (*) we would need that $(\log p_1,\dots,\log p_k)$  does not solve the polynomial equation
	\[
	P_k(n_1x_1,\dots,n_kx_k)=0.
	\]
	unless $n_1,\dots,n_k=0.$ We recall a conjecture of Schanuel,see \cite{A71}.
	\begin{conj}[Schanuel's conjecture]
		Let $z_1,\dots,z_k$ be $k\geq 1$ many $\mathbb{Q}$-linearly independent complex numbers. Then the transcendence degree of $(z_1,\dots,z_k,e^{z_1},\dots,e^{z_k})$ is at least $k.$
	\end{conj}
	We replace $z_i=\log p_i$ in the above conjecture. As a result the conjecture says that \[(\log p_1,\dots,\log p_k,p_1,\dots,p_k)\] has transcendence degree at least $k.$ As $p_1,\dots,p_k$ are already integers, this implies that $(\log p_1,\dots,\log p_k)$  transcendence degree $k$, i.e. $\log p_1,\dots,\log p_k$ are algebraically independent. Thus $$P_k(n_1\log p_1,\dots,n_k\log p_k)=0$$ implies that $n_1,\dots,n_k=0.$
	\section{Acknowledgement}
	HY was financially supported by the University of St Andrews. HY was also financially supported by the University of Cambridge and the Corpus Christi College, Cambridge. HY has received funding from the European Research Council (ERC) under the European Union’s Horizon 2020 research and innovation programme (grant agreement No. 803711). HY thanks the anonymous referees for help comments.
	\providecommand{\bysame}{\leavevmode\hbox to3em{\hrulefill}\thinspace}
	\providecommand{\MR}{\relax\ifhmode\unskip\space\fi MR }
	\providecommand{\MRhref}[2]{%
		\href{http://www.ams.org/mathscinet-getitem?mr=#1}{#2}
	}
	\providecommand{\href}[2]{#2}


\begin{thebibliography}{dABCFS11}
		
		\bibitem[Al20]{Al20} A. Algom, \emph{Slicing theorems and rigidity phenomena for self affine carpets},  Proc. London Math. Soc., \textbf{121}, (2020), 312-353.
		\bibitem[Au20]{Au20}T. Austin, \emph{A new dynamical proof of the Shmerkin--Wu theorem}, arxiv:2009.01292, (2020).
		\bibitem[A71]{A71} J. Ax, \emph{On Schanuel's Conjectures}, Annals of Mathematics(2), \textbf{93}(2), (1971).
		
		\bibitem[B15]{B15}Y. Bugeaud, \emph{Effective irrationality measures for quotients of logarithms of rational numbers}, Hardy-Ramanujan Journal, \textbf{38}, (2015), 45-48.
		\bibitem[BY18]{BHY18}S. Burrell, H. Yu, \emph{Digit expansions of numbers in different bases}, arxiv:1905.00832 , (2019)
		\bibitem[DT97]{DT97} M. Drmota and R. Tichy, \emph{Sequences, Discrepancies and Applications}, Lecture Notes in Mathematics, Springer-Verlag Berlin Heidelberg,(1997).
		\bibitem[EW11]{EW} M. Einsiedler and T. Ward. \emph{ Ergodic theory: with a view towards number theory}, Graduate Texts in Mathematics, Springer-Verlag London, (2011).
		\bibitem[F05]{Fa}K. Falconer, \emph{Fractal geometry: Mathematical foundations and applications, second edition}, John Wiley and Sons, Ltd, (2005).
		\bibitem[F14]{F}J. Fraser, \emph{Assouad type dimensions and homogeneity of fractals}, Transactions of the American Mathematical Society, \textbf{366},(2014), 6687--6733.
		\bibitem[F67]{Fu1} H. Furstenberg, \emph{Disjointness in Ergodic Theory, Minimal Sets, and a Problem in Diophantine Approximation}, Mathematical systems theory, \textbf{1}(1),(1967), 1-49.
		\bibitem[F70]{Fu2}H. Furstenberg, \emph{Intersections of Cantor sets and transversality of semigroups}, Problems in Analysis, Princeton University Press,(1970), 41-59.
		\bibitem[F08]{Fu} H. Furstenberg, \emph{Ergodic fractal measures and dimension conservation}, Ergodic Theory Dynamical Systems, \textbf{28},(2008), 405--422.
		\bibitem[HS12]{HS12}M. Hochman and P. Shmerkin, \emph{Local entropy averages and projections of fractal measures}, Annals of Mathematics, \textbf{175},(2012), 1001-1059.
		\bibitem[K06]{K06}A. Klenke, \emph{Probability Theory: A Comprehensive Course},Springer-Verlag London, (2006).
		\bibitem[KRS12]{KRS} A. K\"{a}enm\"{a}ki, T. Rajala, and V. Suomala, \emph{ Existence of doubling measures via generalised nested cubes}, Proceedings of the American Mathematical Society \textbf{140}(9),(2012), 3275-281.
		\bibitem[L98]{Lu}J. Luukkainen, \emph{Assouad dimension:antifractal metrization, porous sets, and homogeneous measures}, Journal of the Korean Mathematical Society, \textbf{35},(1998), 23-76.
		\bibitem[MT10]{MT} J. Mackay and J. Tyson. \emph{Conformal dimension. Theory and application}, University Lecture Series, 54, American Mathematical Society, Providence, RI, (2010).
		\bibitem[M99]{Ma1} P. Mattila, \emph{Geometry of sets and measures in Euclidean spaces: Fractals and rectifiability},    Cambridge Studies in Advanced Mathematics, Cambridge University Press, (1999).
		\bibitem[PY98]{PY} M. Pollicott and M. Yuri, \emph{Dynamical systems and ergodic theory}, London Mathematical Society Student Texts, Cambridge University Press, (1998).
		\bibitem[R85]{R85}G. Rhin, \emph{Approximants de Pad\'{e} et mesures effectives d’irrationalit\'{e}}, S\'{e}minaire de Th\'{e}orie des Nombres, Paris (1985–86), 155-164.
		\bibitem[S16]{Sh}P. Shmerkin, \emph{On Furstenberg's intersection conjecture, self-similar measures, and the $L^q$ norms of convolutions}, preprint, arxiv:1609.07802, (2016).
		\bibitem[S12]{S12}D. Simmons, \emph{Conditional measures and conditional expectation; Rohlin's disintegration theorem}, Discrete and continuous dynamical systems, \textbf{32}(7), (2012), 2565-2582.
		\bibitem[SS05]{SS}E. Stein and R. Shakarchi, \emph{Real analysis: Measure theory, integration and Hilbert spaces},Princeton University Press, (2005).
		\bibitem[Su14]{Su14} R-F. Sun, \textbf{URL:} \url{http://www.math.nus.edu.sg/~matsr/ProbI/Lecture5.pdf}, (2014).
		\bibitem[VK]{VK} A. Vol'berg and S. Konyagin, \emph{On measures with the doubling condition}, Izv.Akad.Nauk SSSR Ser. Mat., \textbf{51}(3),(1987),666-675.
		\bibitem[W19]{Wu}M. Wu, \emph{A proof of Furstenberg's conjecture on the intersections of $\times p$ and $\times q$-invariant sets}, Annals of Mathematics, \textbf{189}(3), (2019), 707--751.
		
		\bibitem[Y20]{Y20} H. Yu, \emph{Additive properties of numbers with restricted digits}, arxiv: 2004.05926, (2020).
		
		\bibitem[Yu19]{Yu19} H. Yu, \emph{Bernoulli decomposition and arithmetical independence between sequences}, Ergodic Theory and Dynamical Systems, 1-11. doi:10.1017/etds.2019.117.
	\end{thebibliography}
\end{document}